 \theoremstyle{plain}
    \newtheorem{theorem}{Theorem}[section]
    \newtheorem{corollary}[theorem]{Corollary}
    \newtheorem{lemma}[theorem]{Lemma}
    \newtheorem{proposition}[theorem]{Proposition}
 \theoremstyle{definition}
    \newtheorem{definition}[theorem]{Definition}
    \newtheorem{remark}[theorem]{Remark}
 \theoremstyle{remark}
\newcommand{\D}{\displaystyle}
\DeclareMathOperator{\supp}{supp}
\DeclareMathOperator{\Span}{span}
\DeclareMathOperator{\id}{id}
\DeclareMathOperator{\loc}{loc}
\newcommand\qih{\Pi}
\newcommand\NN{\mathbb N}
\newcommand\ZZ{\mathbb Z}
\newcommand\BB{\mathcal B}
\newcommand{\II}{\mathcal I}
\newcommand\QQ{\mathcal Q}
\newcommand\HH{\mathcal H}
\newcommand\VV{\mathcal S}
\newcommand\RRR{\mathcal D}
\newcommand\CC{\mathcal C}
\newcommand\PPP{\mathcal P}
\newcommand{\RR}{\mathbb R}
\begin{document}

\title{New refinable spaces and local approximation estimates for hierarchical splines}

\author[1]{Annalisa Buffa}
\author[1,2,3]{Eduardo M. Garau \thanks{Corresponding author: egarau@santafe-conicet.gov.ar}}
\affil[1]{\footnotesize Istituto di Matematica Applicata e Tecnologie Informatiche 
`E. Magenes' (CNR), Italy}
\affil[2]{\footnotesize Instituto de Matem\'atica Aplicada del Litoral (CONICET-UNL), Argentina} 
\affil[3]{\footnotesize Facultad de Ingenier\'ia Qu\'imica (UNL), Argentina}

\maketitle

\begin{abstract}
 We study the local approximation properties in hierarchical spline spaces through multiscale quasi-interpolation operators. This construction suggests the analysis of a subspace of the classical hierarchical spline space~\cite{Vuong etal} which still satisfies the essential properties of the full space. The B-spline basis of such a subspace can be constructed using parent-children relations only, making it well adapted to local refinement algorithms. 
\end{abstract}

\begin{quote}\small
\textbf{Keywords:} adaptivity in isogemetric analysis, hierarchical splines, quasi-interpolation, local refinement
\end{quote}
%

\section{Introduction}

Local adaptivity in numerical methods for partial differential equations makes 
possible to solve real problems leading to a suitable approximation of the 
desired solution without exceeding the limits of available software. When 
considering isogemetric methods~\cite{HCB05,CHB09}, from a theoretical point of view, the design 
of efficient and robust strategies for local refinement constitutes a challenging 
problem because the tensor product structure of B-splines~\cite{deBoor,Schumaker} is broken.


Hierarchical B-splines (HB-splines) based on the construction presented in~\cite{Kraft-thesis,Kraft, Vuong etal} are a promising approach, because their
construction and properties are closely related to the ones of hierarchical
finite elements. Moreover, truncated hierarchical B-splines (THB-splines) have been introduced in~\cite{GJS12}, where their use as a framework for isogeometric analysis that provides local refinement possibilities has been analysed; see also~\cite{GJS14}. 
Local approximation estimates for hierarchical spline spaces have been studied in~\cite{SM14} using quasi-interpolants described in terms of the truncated hierarchical basis while the use of THB-splines in conjunction with residual based error indicators has been proposed in~\cite{BuGi15} under a few assumptions on the meshes. It is important to remark that truncation is indeed a possible strategy to recover
partition of unity and convex hull property.  On the other hand, the procedure of truncation requires a specific construction that entails complicated basis function supports (that may be non convex and/or not connected) and their use may produce a non negligible
overhead with an adaptive strategy.

In the present paper, we take the point of view of classical hierarchical B-splines and we study
their structure with a special attention to all those properties that may be needed or may facilitate
their use with an adaptive isogeometric method.

When considering an underlying sequence of nested tensor-product spline spaces and the corresponding B-spline bases, a particular way of selecting B-spline basis functions from each different level in order to build a hierarchical basis $\HH$ has been established in~\cite{Kraft-thesis} (see also~\cite{Vuong etal}). This hierarchical basis $\HH$ enjoys some important properties:
\begin{itemize}
\item $\HH$ is a set of linearly independent B-spline functions.
\item It is possible to identify uniquely the basis $\HH$ from the knowledge of a hierarchical mesh.
 \item All functions in the coarsest underlying tensor-product spline space belong to the hierarchical space $\Span\HH$.
 \item Under certain assumptions about the hierarchy of subdomains associated to $\HH$, it is possible to define a multiscale quasi-interpolant operator in $\Span\HH$ obtaining optimal orders of local approximation.
 \item Any enlargement of the hierarchy of the subdomains associated to $\HH$ gives rise to a \emph{refined} basis $\HH^*$ in the sense that $\Span\HH\subset\Span\HH^*$.
\end{itemize}

On the other hand, unlike tensor-product B-spline bases, the functions in the hierarchical basis $\HH$ do not constitute a partition of unity. If we consider the corresponding coefficients $\{a_\beta\}_{\beta\in\HH}\subset\RR$ in order to form such a partition, i.e.,
\begin{equation}\label{E:intro partition of unity}
 \sum_{\beta\in\HH} a_\beta \beta \equiv 1,
\end{equation}
it is known that the coefficients $a_\beta$ are nonnegative, but in fact, some of them can be equal to zero.  

In this article we analyse the local approximation properties of hierarchical splines spaces through the construction of a multiscale quasi-interpolant operator. Kraft~\cite{Kraft-thesis} has introduced such a operator for the case of bivariate spline spaces on  infinite uniform knot vectors and has studied its pointwise approximation properties. We extend his results to the case of open knot vectors with possible multiple internal knots in $d$-dimensional domains, for $d\ge 1$, and we also provide local approximation estimates in $L^q$-norms, for $1\le q\le\infty$.

Furthermore, we propose a  new hierarchical 
spline space, through a construction of a set of basis functions named $\tilde\HH$ that satisfies all important properties just listed above. In particular, $\tilde\HH\subset\HH$, and therefore, in general, the \emph{new} hierarchical space $\Span\tilde\HH$ may be smaller than $\Span\HH$. However, the new basis is easier to manage and to update when performing local adaptive refinement, and in this case, the coefficients for~\eqref{E:intro partition of unity} are strictly positive. 

This paper is organized as follows. In Section~\ref{S:tensor product} we introduce the notation and the assumptions for the underlying tensor-product spline spaces to be considered, and in Section~\ref{S:hierarchical basis} we briefly introduce the standard hierarchical B-spline basis and prove some results which will be useful later. In Section~\ref{S:interpolation} we construct a multiscale quasi-interpolant operator and study the local approximation properties in hierarchical spline spaces. In Section~\ref{S:new hierarchical basis} we define the new hierarchical B-spline basis and prove some of its basic properties.  Finally, we conclude the article with some final remarks in Section~\ref{S:remarks}.

\section{Spline spaces and B-spline bases}\label{S:tensor product}

\paragraph{Univariate B-spline bases}
Let $\Xi_{p,n}:=\{\xi_j\}_{j=1}^{n+p+1}$ be a $p$-open knot vector, i.e., a \emph{sequence} such that
\begin{equation*}\label{E:open knot vector}
 0=\xi_1=\dots=\xi_{p+1}<\xi_{p+2}\le\dots\le\xi_{n}<\xi_{n+1}=\dots=\xi_{n+p+1}=1,
\end{equation*}
 where the two positive integer $p$ and $n$ denote a given polynomial degree, and the corresponding number of B-splines defined over the subdivision $\Xi_{p,n}$, respectively. Here, $n\ge p+1$.
We also introduce the \emph{set} $Z_{p,n}:=\{\zeta_j\}_{j=1}^{\tilde n}$ of knots without repetitions, and denote by $m_j$ the multiplicity of the breakpoint $\zeta_j$, such that
$$\Xi_{p,n}=\{\underbrace{\zeta_1,\dots,\zeta_1,}_{m_1 \text{ times}}\underbrace{\zeta_2,\dots,\zeta_2,}_{m_2 \text{ times}}\dots \underbrace{\zeta_{\tilde n},\dots,\zeta_{\tilde n}}_{m_{\tilde n} \text{ times}}\},$$
with $\D\sum_{i=1}^{\tilde n} m_i = n+p+1$. Note that the two extreme knots are repeated $p+1$ times, i.e., $m_1=m_{\tilde n}=p+1$. We assume that an internal knot can be repeated at most $p+1$ times, i.e., $m_j\le p+1$, for $j=2,\dots,{\tilde n}-1$. 

Let $\BB(\Xi_{p,n}) := \{b_1,b_2,\dots,b_n\}$ be the B-spline basis (cf.~\cite{deBoor,Schumaker}) associated to the knot vector $\Xi_{p,n}$. The \emph{local knot vector} of $b_j$ is given by 
$$\Xi_{b_j}:= \{\xi_{j},\dots,\xi_{j+p+1}\},$$
which is a subsequence of $p+2$ consecutive knots of $\Xi_{p,n}$. We remark that 
$$\supp b_j = [\xi_{j},\xi_{j+p+1}].$$

Let $\II(\Xi_{p,n})$ be the mesh defined by
$$\II(\Xi_{p,n}):=\{[\zeta_j,\zeta_{j+1}]\,|\,j=1,\dots,{\tilde n}-1\}.$$
For each $I=[\zeta_j,\zeta_{j+1}]\in\II(\Xi_{p,n})$ there exists a unique  $k=\sum_{i=1}^{j} m_j$ such that $I=
[\xi_k,\xi_{k+1}]$ and $\xi_k\neq\xi_{k+1}$.  The union of the supports of the B-splines acting on $I$ identifies the \emph{support extension} $\tilde I$, namely
\begin{equation*}
 \tilde I := [\xi_{k-p},\xi_{k+p+1}].
\end{equation*}

Let $\Xi_0:=\Xi_{p,n}$ be a given $p$-open knot vector. We consider a sequence $\{\Xi_\ell\}_{\ell\in\NN}$ of sucessive refinements of $\Xi_0$, i.e.,
\begin{equation}\label{E:nested knot vectors}
 \Xi_0\subset\Xi_1\subset\dots,
\end{equation}
where $\Xi_\ell$ is a $p$-open knot vector, and  $\Xi_\ell\subset\Xi_{\ell+1}$ means that  $\Xi_\ell$ is a subsequence of $\Xi_{\ell+1}$, for $\ell\in\NN_0$. In other words, condition~\eqref{E:nested knot vectors} says that if $\xi$ is a knot in $\Xi_{\ell}$ with multiplicity $m$, then $\xi$ is also a knot in $\Xi_{{\ell+1}}$ with multiplicity at least $m$, for $\ell\in\NN_0$.

Let $\BB_\ell:=\BB(\Xi_\ell)$ be the B-spline basis, for $\ell\in\NN_0$.

\begin{definition}
 Let $\beta_\ell\in\BB_\ell$ be given and let $\Xi_{\beta_\ell}$ be the corresponding local knot vector. Let $\Xi_{\beta_\ell}^{(\ell+1)}\subset \Xi_{\ell+1}$ be the knot vector obtained from $\Xi_{\beta_\ell}$ after inserting the knots of  $\Xi_{\ell+1}$ which are in the interior of $\supp\beta_{\ell}$. We say that $\beta_{\ell+1}\in\BB_{\ell+1}$ is a child of $\beta_\ell\in\BB_\ell$ if the local knot vector of $\beta_{\ell+1}$, which is denoted by $\Xi_{\beta_{\ell+1}}$, is a subsequence of $\Xi_{\beta_\ell}^{(\ell+1)}$. In other words, the children of $\beta_{\ell}$ are the B-splines in $\BB_{\ell+1}$ whose local knot vector consist of $p+2$ consecutive knot of $\Xi_{\beta_\ell}^{(\ell+1)}$. We let
 $$\CC(\beta_\ell):=\{\beta_{\ell+1}\in\BB_{\ell+1}\,|\, \beta_{\ell+1} \text{ is a child of }\beta_\ell\}.$$ 

 Conversely, if $\beta_{\ell+1}\in\BB_{\ell+1}$ is given, we define the set of parents of $\beta_{\ell+1}$ by
\begin{equation*}
\PPP(\beta_{\ell+1}):=\{\beta_{\ell}\in\BB_{\ell}\,|\, \beta_{\ell+1} \text{ is a child of }\beta_\ell\}.
\end{equation*}
\end{definition}

It is easy to check that the last definition means that $\beta_{\ell+1}\in\BB_{\ell+1}$ is a child of $\beta_\ell\in\BB_\ell$ if and only if
\begin{enumerate}
 \item[(i)] $\min \Xi_{\beta_{\ell}}\le \min \Xi_{\beta_{\ell+1}} \le \max  \Xi_{\beta_{\ell+1}} \le \max  \Xi_{\beta_{\ell}}$.
 \item[(ii)] If $\xi\in\Xi_{\beta_{\ell+1}}$ matches any of the end points of $\Xi_{\beta_{\ell}}$, then the multiplicity of $\xi$ in $\Xi_{\beta_{\ell+1}}$ is less or equal to the  multiplicity of $\xi$ in $\Xi_{\beta_{\ell}}$.
\end{enumerate}
In particular, notice that if $\beta_{\ell+1}$ is a child of $\beta_\ell$, then $\supp\beta_{\ell+1}\subset\supp\beta_\ell$.

Since $\Xi_\ell\subset \Xi_{\ell+1}$, using the so-called knot insertion formula, all B-splines of level $\ell$ can be written as a linear combination of B-splines of level $\ell+1$. More precisely, if $\beta_\ell\in\BB_\ell$, then,
\begin{equation}\label{E:two scale relation 1d}
\beta_\ell = \sum_{\substack{\beta_{\ell+1}\in\CC(\beta_\ell)}} 
c_{\beta_{\ell+1}}(\beta_\ell)\beta_{\ell+1},
\end{equation}
where the coefficients $c_{\beta_{\ell+1}}(\beta_\ell)$ are positive, and $\CC(\beta_\ell)\subset\BB_{\ell+1}$ is the set of children of $\beta_\ell$.

\paragraph{Tensor-product B-spline bases}
Let $d\ge 1$. In order to define a tensor-product $d$-variate spline function space on $\Omega:=[0,1]^d\subset \RR^d$, we consider ${\bf p}:=(p_1,p_2,\dots,p_d)$ the vector of polynomial degrees with respect to each coordinate direction and ${\bf n}:=(n_1,n_2,\dots,n_d)$, where $n_i\ge p_i+1$. For $i=1,2,\dots,d$, let $\Xi_{p_i,n_i}:=\{\xi_j^{(i)}\}_{j=1}^{n_i+p_i+1}$ be a $p_i$-open knot vector, i.e.,
\begin{equation*}
 0=\xi_1^{(i)}=\dots=\xi_{p_i+1}^{(i)}<\xi_{p_i+2}^{(i)}\le\dots\le\xi_{n_i}^{(i)}<\xi_{n_i+1}^{(i)}=\dots=\xi_{n_i+p_i+1}^{(i)}=1,
\end{equation*}
where the two extreme knots are repeated $p_i+1$ times and any internal knot can be repeated at most $p_i+1$ times. We denote by $\VV_{\bf p,n}$ the tensor-product spline space spanned by the B-spline basis $\BB_{\bf p,n}$  defined as the tensor-product of the univariate B-spline bases $\BB(\Xi_{p_1,n_1}),\ldots,\BB(\Xi_{p_d,n_d})$. Let $\QQ_{\bf p,n}$ be tensor-product mesh consisting of the elements $Q= I_1\times\dots\times I_d$, where $I_i$ is an element (closed interval) of the $i$-th univariate mesh $\II(\Xi_{p_i,n_i})$, for $i=1,\dots,d$. 
 
As we did for the univariate case, we consider a given sequence $\{\VV_\ell\}_{n\in\NN_0}$ of tensor-product $d$-variate spline spaces such that
\begin{equation}\label{E:tensor-product spaces}
 \VV_0\subset \VV_1\subset \VV_2\subset\VV_3\subset\dots,
\end{equation}
with the corresponding tensor-product B-spline bases denoted by
\begin{equation*}
 \BB_0,  \BB_1,  \BB_2, \BB_3,\dots,
\end{equation*}
respectively. More precisely, if ${\bf p}=(p_1,p_2,\dots,p_d)$ is given, for $\ell\in\NN_0$, $\VV_\ell:=\VV_{{\bf p},{\bf n}_\ell}$ is the tensor-product spline space and $\BB_\ell:= \BB_{{\bf p},{\bf n}_\ell}$ is the corresponding B-spline basis, for some ${\bf n}_\ell=(n_1^\ell,n_2^\ell,\dots,n_d^\ell)$. In order to guarantee~\eqref{E:tensor-product spaces}, we assume that if $\xi$ is a knot in $\Xi_{p_i,n_i^\ell}$ with multiplicity $m$, then $\xi$ is also a knot in $\Xi_{p_i,n_i^{\ell+1}}$ with multiplicity at least $m$, for $i=1,\dots,d$ and $\ell\in\NN_0$. Furthermore, we denote by $\QQ_\ell:=\QQ_{{\bf p},{\bf n}_\ell}$ the tensor-product mesh and we say that $Q\in\QQ_\ell$ is a \emph{cell of level $\ell$}. We state some well-known properties of the B-spline basis functions that will be useful in this presentation~\cite{deBoor,Schumaker}:
\begin{itemize}
\item \emph{Local linear independence.} For any nonempty open set $O\subset\Omega$, the functions in $\BB_\ell$ that do not vanish identically on $O$, are linearly independent on $O$.
\item \emph{Positive partition of unity.} The B-spline basis functions of level $\ell$ form a partition of the unity 
on $\Omega$, i.e.,
 \begin{equation}\label{E:partition of unity in B_0}
  \sum_{\beta\in\BB_\ell} \beta \equiv 1,\qquad\text{on }\Omega.
 \end{equation}
\item \emph{Two-scale relation between consecutive levels.} The B-splines of level $\ell$ can be written as a linear combination of B-splines of level $\ell+1$. More precisely, 
\begin{equation}\label{E:two scale relation}
 \beta_\ell = \sum_{\beta_{\ell+1}\in\CC(\beta_{\ell})} 
c_{\beta_{\ell+1}}(\beta_\ell)\beta_{\ell+1}, 
\qquad \forall\,\beta_\ell\in\BB_\ell,
\end{equation}
where the coefficients $c_{\beta_{\ell+1}}(\beta_\ell)$ are positive and can be computed using the corresponding coefficients in the univariate two-scale relation~\eqref{E:two scale relation 1d} and Kronecker products.
Here, $\CC(\beta_\ell)$ is the set of children of $\beta_\ell$, and we say that $\beta_{\ell+1}\in\BB_{\ell+1}$ is a child of $\beta_\ell$ if the $i$-th univariate B-spline which defines $\beta_{\ell+1}$ is a child of the $i$-th univariate B-spline defining $\beta_\ell$, for each coordinate direction $i=1,\dots,d$. 
\end{itemize}

\begin{remark}
Notice that if we define $c_{\beta_{\ell+1}}(\beta_\ell):=0$ when $\beta_{\ell+1}$ is not a child of $\beta_\ell$, then equation~\eqref{E:two scale relation} can be written as
\begin{equation}\label{E:two scale relation 2}
 \beta_\ell = \sum_{\beta_{\ell+1}\in\BB_{\ell+1}} 
c_{\beta_{\ell+1}}(\beta_\ell)\beta_{\ell+1}, 
\qquad \forall\,\beta_\ell\in\BB_\ell.
\end{equation}

In particular, we remark that
\begin{equation}\label{E:set of children}
\CC(\beta_\ell)=\{\beta_{\ell+1}\in\BB_{\ell+1}\,|\,c_{\beta_{\ell+1}}(\beta_\ell)>0\}\subset\{\beta_{\ell+1}\in\BB_{\ell+1}\,|\,\supp\beta_{\ell+1}\subset\supp\beta_\ell\}.
\end{equation}
Finally, we also consider the set of parents of $\beta_{\ell+1}\in\BB_{\ell+1}$ given by
\begin{equation*}
\PPP(\beta_{\ell+1}):=\{\beta_{\ell}\in\BB_{\ell}\,|\, \beta_{\ell+1} \text{ is a child of }\beta_\ell\}.
\end{equation*}

\end{remark}

\section{Hierarchical B-spline basis}\label{S:hierarchical basis}

\begin{definition} 
If $n\in\NN$, 
we say that ${\bf\Omega}_n := \{\Omega_0,\Omega_1,\dots,\Omega_n\}$ is a 
\emph{hierarchy of subdomains of $\Omega$ of depth $n$} if
\begin{enumerate}
 \item[(i)] $\Omega_\ell$ is the union of cells of level 
$\ell-1$, for $\ell = 1,2,\dots,n$.
 \item[(ii)]$\Omega = \Omega_0 \supset \Omega_1 \supset \dots \supset 
\Omega_{n-1}\supset \Omega_n = \emptyset$.
\end{enumerate} 
\end{definition}

We now define the hierarchical B-spline basis ${\HH}= {\HH}({\bf\Omega}_n)$ in the following recursive way
\begin{equation}\label{E:definition of Hltilde}
\begin{cases}\HH_0:=\BB_0,\\
 {\HH}_{\ell+1}:=\{\beta\in\HH_{\ell}\,|\,\supp\beta\not\subset\Omega_{\ell+1}\}\cup\{\beta\in\BB_{\ell+1}\,|\, \supp\beta\subset\Omega_{\ell+1}\},\qquad\ell=0,\dots,n-2.\\
 \HH:=\HH_{n-1}.\end{cases}
\end{equation}
If $\RRR_\ell:=\HH_\ell\setminus\HH_{\ell+1}$, then
$$\RRR_\ell=\{\beta\in\HH_{\ell}\,|\,\supp\beta\subset\Omega_{\ell+1}\}=\{\beta\in\BB_{\ell}\,|\,\supp\beta\subset\Omega_{\ell+1}\}.$$
Notice that in order to get $\HH_{\ell+1}$ from $\HH_{\ell}$ we replace the set $\RRR_\ell$ by
$$\{\beta\in\BB_{\ell+1}\,|\, \supp\beta\subset\Omega_{\ell+1}\}.$$
Moreover, it is easy to check that
\begin{equation}\label{E:Hierarchical basis}
{\HH}= \bigcup_{\ell = 0}^{n-1} \{\beta \in \BB_\ell \mid \supp \beta 
\subset \Omega_\ell \wedge  \supp \beta \not\subset \Omega_{\ell+1}\}.
\end{equation} 
We say that $\beta$ is \emph{active} if $\beta\in\HH$. The 
corresponding underlying mesh $\QQ = \QQ({\bf\Omega}_n)$ is given 
by
\begin{equation}\label{E:hierarchical mesh}
\QQ:= \bigcup_{\ell = 0}^{n-1} \{ Q\in\QQ_\ell \,\mid\, Q\subset \Omega_\ell 
\,\wedge\, Q\not\subset \Omega_{\ell+1}\}, 
\end{equation}
and we say that $Q$ is an \emph{active cell} is $Q\in\QQ$, or that $Q$ is an 
\emph{active cell of level $\ell$} if $Q\in\QQ\cap \QQ_\ell$.

Our definition of the hierarchical B-spline basis $\HH$ is slightly different from the one given in~\cite{Kraft-thesis,Vuong etal}, because they consider the domains $\Omega_\ell$, the cells $Q$ and the function supports as open sets. That definition does not allow to rebuild the hierarchical space uniquely from the only knowledge of the hierarchical mesh (i.e., the active cells of each level); see for example~\cite[Figure 2.5]{Kraft-thesis} or~\cite[Figure 2]{Kraft}. 

We know (cf.~\cite{Vuong etal}) that functions in $\HH$ constitutes a linearly independent set and that 
$$\VV_0=\Span\BB_0\subset\Span\HH.$$

Unlike the B-spline bases $\BB_\ell$ for tensor-product spline spaces, the hierarchical B-spline basis $\HH$ does not constitute a partition of the unity. Instead, we can prove the following result. 

\begin{lemma}\label{L:definition of the weights}(Partition of unity in $\HH$). Let $\HH$ be the hierarchical B-spline basis associated to the hierarchy of 
subdomains of depth $n$,
${\bf\Omega}_n := \{\Omega_0,\Omega_1,\dots,\Omega_n\}$. Let $a_{\beta_0} := 1$ for all $\beta_0\in\BB_0$ and
\begin{equation}\label{E:definition of the weights}
 a_{\beta_{\ell+1}}:=\sum_{\substack{\beta_\ell\in\BB_\ell \\ \supp 
\beta_\ell\subset\Omega_{\ell+1}}} a_{\beta_\ell} 
c_{\beta_{\ell+1}}(\beta_\ell),\qquad\forall\,\beta_{\ell+1}\in\BB_{\ell+1},\,
\supp\beta_{\ell+1}\subset\Omega_{\ell+1},
\end{equation}
for $\ell = 0,1,\dots,n-2.$ Then,
\begin{equation*}
\sum_{\beta\in {\HH}} a_\beta \beta \equiv 1, \qquad\text{on } \Omega. 
\end{equation*}
\end{lemma}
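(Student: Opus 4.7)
The plan is to prove by induction on $\ell$ that the intermediate identity
$$\sum_{\beta\in\HH_\ell} a_\beta\,\beta \equiv 1 \quad\text{on }\Omega$$
holds for each $\ell = 0,1,\dots,n-1$, where the $a_\beta$ are extended from the recursion in the natural way (every $a_{\beta_\ell}$ with $\supp\beta_\ell\subset\Omega_{\ell+1}$ is defined by~\eqref{E:definition of the weights}; for $\beta\in\HH$ the coefficient $a_\beta$ is the one used in the final identity). Since $\HH=\HH_{n-1}$, taking $\ell=n-1$ gives the lemma.

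The base case $\ell=0$ is immediate: $\HH_0=\BB_0$, $a_{\beta_0}=1$, and~\eqref{E:partition of unity in B_0} applied to level $0$ yields the identity. For the inductive step, I would write
$$\sum_{\beta\in\HH_{\ell+1}} a_\beta\,\beta = \sum_{\beta\in\HH_\ell\setminus\RRR_\ell} a_\beta\,\beta \;+\; \sum_{\substack{\beta\in\BB_{\ell+1}\\ \supp\beta\subset\Omega_{\ell+1}}} a_\beta\,\beta,$$
using $\HH_{\ell+1}=(\HH_\ell\setminus\RRR_\ell)\cup\{\beta\in\BB_{\ell+1}:\supp\beta\subset\Omega_{\ell+1}\}$ and the fact that these two sets are disjoint. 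By the induction hypothesis the first sum on the right equals $1-\sum_{\beta\in\RRR_\ell} a_\beta\,\beta$, so it suffices to verify
$$\sum_{\beta\in\RRR_\ell} a_\beta\,\beta \;=\; \sum_{\substack{\beta\in\BB_{\ell+1}\\ \supp\beta\subset\Omega_{\ell+1}}} a_\beta\,\beta. \qquad (\star)$$

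To get $(\star)$, I would apply the two-scale relation~\eqref{E:two scale relation 2} to each $\beta_\ell\in\RRR_\ell$, swap the order of summation, and observe that whenever $c_{\beta_{\ell+1}}(\beta_\ell)>0$ for some $\beta_\ell\in\RRR_\ell$ the function $\beta_{\ell+1}$ is a child of $\beta_\ell$, so by~\eqref{E:set of children} one has $\supp\beta_{\ell+1}\subset\supp\beta_\ell\subset\Omega_{\ell+1}$. This lets me restrict the outer sum to children supported in $\Omega_{\ell+1}$, and the inner coefficient in front of each such $\beta_{\ell+1}$ is exactly $\sum_{\beta_\ell\in\BB_\ell,\,\supp\beta_\ell\subset\Omega_{\ell+1}} a_{\beta_\ell}c_{\beta_{\ell+1}}(\beta_\ell)$, which equals $a_{\beta_{\ell+1}}$ by definition~\eqref{E:definition of the weights}. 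This gives~$(\star)$ and closes the induction.

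The only subtle point is the bookkeeping in the inductive step: one must notice that $\RRR_\ell=\{\beta\in\BB_\ell:\supp\beta\subset\Omega_{\ell+1}\}$ coincides exactly with the index set appearing in the definition of $a_{\beta_{\ell+1}}$, and one must use the support-inclusion property for children in order to drop children not supported in $\Omega_{\ell+1}$ from the swapped double sum. No estimation or approximation is involved — the statement is a purely algebraic refinement identity — so the argument is essentially a careful telescoping through the levels.
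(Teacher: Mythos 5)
Your proof is correct and follows essentially the same route as the paper's: induction on the level $\ell$ for the intermediate identity $\sum_{\beta\in\HH_\ell}a_\beta\beta\equiv 1$, with the inductive step reduced to the exchange $\sum_{\beta\in\RRR_\ell}a_\beta\beta=\sum_{\supp\beta_{\ell+1}\subset\Omega_{\ell+1}}a_{\beta_{\ell+1}}\beta_{\ell+1}$ via the two-scale relation and the definition of the weights. The paper simply writes this chain of equalities in the reverse order and leaves the summation swap implicit, so nothing essential differs.
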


\begin{remark}
 In view of the linear independence of functions in $\HH$, we have that the set $\{a_\beta\}_{\beta\in\HH}$ is uniquely determined. On the other hand, we remark that the definition~\eqref{E:definition of the weights} depends on the hierarchy of subdomains~${\bf\Omega}_n$, and that the coefficients $a_\beta$ are defined not only for $\beta\in\HH$ but also for $\beta\in\bigcup_{\ell=0}^{n-1}\{\beta_\ell\in\BB_\ell\,|\,\supp\beta_\ell\subset\Omega_\ell\}.$
\end{remark}

\begin{proof}
By~\eqref{E:partition of unity in B_0} we have that $\sum_{\beta\in \HH_0} 
a_\beta \beta \equiv 1$, on $\Omega$. Assume now that for a fixed $\ell$ 
($0\le \ell\le n-2$), $\sum_{\beta\in \HH_\ell} a_\beta \beta \equiv 1$, on 
$\Omega$. Thus, using~\eqref{E:definition of the weights} and~\eqref{E:two scale relation 2} we have that
\begin{align*}
 \sum_{\beta\in \HH_{\ell+1}} a_\beta \beta &= \sum_{\substack{\beta\in\HH_\ell \\ \supp 
\beta\not\subset\Omega_{\ell+1}}} a_\beta \beta +
 \sum_{\substack{\beta_{\ell+1}\in  \BB_{\ell+1}\\ \supp\beta_{\ell+1}\subset 
\Omega_{\ell+1}}} a_{\beta_{\ell+1}} \beta_{\ell+1}\\ &= \sum_{\substack{\beta\in\HH_\ell \\ \supp 
\beta\not\subset\Omega_{\ell+1}}} a_\beta \beta 
+\sum_{\substack{\beta_\ell\in\BB_\ell \\ \supp 
\beta_\ell\subset\Omega_{\ell+1}}} a_{\beta_\ell} \beta_\ell\\
&=\sum_{\beta\in \HH_\ell} a_\beta \beta.
\end{align*}
Thus, $\sum_{\beta\in \HH_{\ell+1}} a_\beta \beta \equiv 1$, on $\Omega$. 

Finally, the proof is complete regarding that $\HH=\HH_{n-1}$.
\end{proof}

 Notice that in the last lemma $a_\beta\ge 0$, for all $\beta\in\HH$. The next result characterizes the functions $\beta$ whose weight $a_\beta$ is equal to zero.

\begin{theorem}\label{T:caracterization of nonzero weights}
 Let $\beta_{\ell+1}\in\BB_{\ell+1}$ be such that 
$\supp\beta_{\ell+1}\subset\Omega_{\ell+1}$, for some 
$\ell=0,1,\dots,n-2$. Then, the following statements are equivalent:
\begin{enumerate}
 \item [(i)] $a_{\beta_{\ell+1}} = 0$.
 \item [(ii)] 
$\forall\,\beta_\ell\in\PPP(\beta_{\ell+1}),\quad (a_{\beta_\ell}>0 \,\,\wedge\,\, \supp\beta_{\ell}\subset\Omega_{\ell}\quad\Longrightarrow\quad \supp\beta_\ell\not\subset\Omega_{\ell+1})$.
 \end{enumerate}
\end{theorem}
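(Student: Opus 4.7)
The plan is to read off the equivalence directly from the defining recursion~\eqref{E:definition of the weights} together with the sign information we already have for its ingredients. Concretely, I would first observe, by induction on $\ell$ starting from $a_{\beta_0}=1$, that every coefficient $a_{\beta_\ell}$ (whenever defined, i.e.\ whenever $\supp\beta_\ell\subset\Omega_\ell$) is nonnegative; this is immediate because each summand in~\eqref{E:definition of the weights} is a product of a nonnegative $a_{\beta_\ell}$ and a positive $c_{\beta_{\ell+1}}(\beta_\ell)$.

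Next I would restrict the index set in~\eqref{E:definition of the weights} to parents. Since $c_{\beta_{\ell+1}}(\beta_\ell)>0$ exactly when $\beta_\ell\in\PPP(\beta_{\ell+1})$ and vanishes otherwise (by~\eqref{E:set of children} and the definition just below~\eqref{E:two scale relation 2}), the definition reduces to
\begin{equation*}
 a_{\beta_{\ell+1}}\;=\;\sum_{\substack{\beta_\ell\in\PPP(\beta_{\ell+1})\\ \supp\beta_\ell\subset\Omega_{\ell+1}}} a_{\beta_\ell}\,c_{\beta_{\ell+1}}(\beta_\ell),
\end{equation*}
and, because $\Omega_{\ell+1}\subset\Omega_\ell$, the condition $\supp\beta_\ell\subset\Omega_{\ell+1}$ automatically guarantees that $a_{\beta_\ell}$ is defined. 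Now every summand is nonnegative and each coefficient $c_{\beta_{\ell+1}}(\beta_\ell)$ is strictly positive, so the sum is zero if and only if $a_{\beta_\ell}=0$ for every parent $\beta_\ell\in\PPP(\beta_{\ell+1})$ with $\supp\beta_\ell\subset\Omega_{\ell+1}$.

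Finally I would take the contrapositive and repackage: $a_{\beta_{\ell+1}}=0$ iff for every $\beta_\ell\in\PPP(\beta_{\ell+1})$, whenever $a_{\beta_\ell}$ is defined (that is, $\supp\beta_\ell\subset\Omega_\ell$) and $a_{\beta_\ell}>0$, then necessarily $\supp\beta_\ell\not\subset\Omega_{\ell+1}$. This is precisely statement (ii).

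The only delicate bookkeeping issue, and really the only reason the statement looks slightly asymmetric, is the side condition $\supp\beta_\ell\subset\Omega_\ell$ in (ii), which is needed so that the expression $a_{\beta_\ell}>0$ is meaningful; I would flag this at the outset so the reader does not read (ii) as an additional hypothesis but rather as a well-definedness clause. No deeper step is required: the argument is a one-line contrapositive applied to a sum of nonnegative terms with positive coefficients.
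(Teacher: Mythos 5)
Your proposal is correct and follows essentially the same route as the paper: read off from the recursion~\eqref{E:definition of the weights} that a sum of nonnegative terms vanishes iff each term does, identify the terms with positive $c_{\beta_{\ell+1}}(\beta_\ell)$ as exactly the parents, and take the contrapositive. Your extra care about the nonnegativity induction and the well-definedness clause $\supp\beta_\ell\subset\Omega_\ell$ only makes explicit what the paper leaves implicit.
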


\begin{proof}
 From~\eqref{E:definition of the weights}, we have that $a_{\beta_{\ell+1}} =0$ 
if and only if for all $\beta_{\ell}\in\BB_\ell$, with
$\supp\beta_{\ell}\subset\Omega_{\ell+1}$,
$$a_{\beta_{\ell}}=0 \qquad\text{ or } \qquad c_{\beta_{\ell+1}}(\beta_\ell) = 0,$$
or, equivalently, if and only if, all parents $\beta_{\ell}$ of $\beta_{\ell+1}$ with $a_{\beta_\ell}>0$, satisfy 
$\supp\beta_{\ell}\not\subset\Omega_{\ell+1}$.
\end{proof}

Notice that there is no function $\beta_0\in\BB_0$ satisfying $a_{\beta_0}=0$. Now, the last theorem implies that $a_{\beta_1}=0$ for $\beta_1\in\BB_1$ with $\supp\beta_1\subset\Omega_1$ if and only if all its parents are active. Roughly speaking, $\Omega_1$ is too narrow around $\supp\beta_1$. Notice that, as soon as one of the $\beta_1$'s parents is deactivated, $a_{\beta_1}$ will become positive in the new configuration.  

We conclude this section with the following result, which states that each deactivated B-spline of level $\ell$ can be written as a linear combination of functions in the hierarchical basis of the subsequent levels, i.e., $\ell+1$,\dots, $n-1$.

\begin{lemma}\label{L:decomposition1}
Let $\HH$ be the hierarchical B-spline basis associated to the hierarchy of 
subdomains of depth $n$,
${\bf\Omega}_n := \{\Omega_0,\Omega_1,\dots,\Omega_n\}$. Then, 
\begin{equation}\label{E:decomposition1}
\RRR_\ell\subset \Span\left(\HH\cap\bigcup_{k=\ell+1}^{n-1}\BB_k\right),  
 \end{equation}
for $\ell=0,1,\dots,n-2$. 
\end{lemma}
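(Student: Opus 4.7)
The plan is to prove the inclusion by reverse induction on $\ell$, starting from $\ell = n-2$ and descending to $\ell = 0$. The engine of the proof is the two-scale relation~\eqref{E:two scale relation}, combined with the simple observation that if $\supp\beta_\ell \subset \Omega_{\ell+1}$ and $\beta_{\ell+1} \in \CC(\beta_\ell)$, then $\supp\beta_{\ell+1} \subset \supp\beta_\ell \subset \Omega_{\ell+1}$ (using the containment property of children stated right after the definition, or equivalently~\eqref{E:set of children}).

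For the base case $\ell = n-2$, take any $\beta_{n-2} \in \RRR_{n-2}$, so $\supp\beta_{n-2} \subset \Omega_{n-1}$. Apply~\eqref{E:two scale relation} to write $\beta_{n-2}$ as a positive combination of its children in $\BB_{n-1}$. Each child $\beta_{n-1}$ satisfies $\supp\beta_{n-1} \subset \Omega_{n-1}$ and trivially $\supp\beta_{n-1} \not\subset \Omega_n = \emptyset$, so the characterization~\eqref{E:Hierarchical basis} places each such child in $\HH \cap \BB_{n-1}$. This settles the base case.

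For the inductive step, assume the statement has been proved for $\ell+1, \ldots, n-2$, and take $\beta_\ell \in \RRR_\ell$. Writing the two-scale relation $\beta_\ell = \sum_{\beta_{\ell+1} \in \CC(\beta_\ell)} c_{\beta_{\ell+1}}(\beta_\ell)\, \beta_{\ell+1}$, I would partition the children into two groups: those with $\supp\beta_{\ell+1} \not\subset \Omega_{\ell+2}$ (which by~\eqref{E:Hierarchical basis} belong to $\HH \cap \BB_{\ell+1}$, since their support also lies in $\Omega_{\ell+1}$), and those with $\supp\beta_{\ell+1} \subset \Omega_{\ell+2}$ (which belong to $\RRR_{\ell+1}$). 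To the second group I would apply the inductive hypothesis, which expresses each such $\beta_{\ell+1}$ as a linear combination of functions in $\HH \cap \bigcup_{k=\ell+2}^{n-1} \BB_k$. Substituting back yields $\beta_\ell$ as a linear combination of functions in $\HH \cap \bigcup_{k=\ell+1}^{n-1} \BB_k$, completing the induction.

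There is no real obstacle here — the statement is essentially a bookkeeping consequence of the two-scale formula and the recursive construction~\eqref{E:definition of Hltilde}. The only mildly delicate point is checking that the dichotomy used in the inductive step is exhaustive, i.e., that every child of a $\beta_\ell \in \RRR_\ell$ indeed has support contained in $\Omega_{\ell+1}$; this is immediate from $\supp\beta_{\ell+1} \subset \supp\beta_\ell$ and $\supp\beta_\ell \subset \Omega_{\ell+1}$.
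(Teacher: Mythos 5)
Your proposal is correct and follows essentially the same route as the paper: reverse induction starting at $\ell=n-2$, the two-scale relation to descend one level, and a split of the resulting level-$(\ell+1)$ B-splines into those that are active (support not contained in $\Omega_{\ell+2}$, hence in $\HH\cap\BB_{\ell+1}$ by~\eqref{E:Hierarchical basis}) and those in $\RRR_{\ell+1}$, to which the inductive hypothesis applies. The only cosmetic difference is that you index the sum over $\CC(\beta_\ell)$ while the paper sums over all $\beta_{\ell+1}\in\BB_{\ell+1}$ with support in $\Omega_{\ell+1}$ using the zero-coefficient convention~\eqref{E:two scale relation 2}; the substance is identical.
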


\begin{proof}
 Notice that~\eqref{E:decomposition1} holds for $\ell=n-2$ due to~\eqref{E:two scale relation 2}. Let us assume that~\eqref{E:decomposition1} holds for some $\ell$, and prove that it holds for $\ell-1$. Let $\beta_{\ell-1}\in\RRR_{\ell-1}$. Since $\beta_{\ell-1}\in\BB_{\ell-1}$ and $\supp\beta_{\ell-1}\subset\Omega_\ell$, we have that
 $$\beta_{\ell-1} = \sum_{\substack{\beta_\ell\in\BB_\ell\\\supp\beta_\ell\subset \Omega_{\ell}}} c_{\beta_\ell}(\beta_{\ell-1})\beta_\ell=\sum_{\substack{\beta_\ell\in\BB_\ell\\\supp\beta_\ell\subset \Omega_{\ell+1}}} c_{\beta_\ell}(\beta_{\ell-1})\beta_\ell+\sum_{\beta_\ell\in\HH\cap\BB_\ell} c_{\beta_\ell}(\beta_{\ell-1})\beta_\ell.$$
 Thus, $\beta_{\ell-1}\in\Span\left(\HH\cap\bigcup_{k=\ell}^{n-1}\BB_k\right)$, which concludes the proof.
\end{proof}

\section{Approximation properties and quasi-interpolation}\label{S:interpolation}

Let ${\bf\Omega}_n := \{\Omega_0,\Omega_1,\dots,\Omega_n\}$ be a hierarchy of subdomains of $\Omega$ of depth $n$ and let $\QQ$ be the hierarchical mesh given by~\eqref{E:hierarchical mesh}. Let ${\HH}$ be the hierarchical basis defined in~\eqref{E:Hierarchical basis}. 

A multiscale quasi-interpolant operator has been introduced by Kraft~\cite{Kraft-thesis}, where pointwise approximation estimates were established, for the case $\Omega = \RR^2$, where there is no boundary. There, the (infinite) knot vector in each direction considered for building the initial tensor-product space $\VV_0$ was the set of the integer numbers $\ZZ$, and then dyadic refinement is performed to obtain the subsequent levels. In particular, we extend the Kraft construction to the case of open knot vectors with possible multiple internal knots. Moreover, we present a multiscale quasi-interpolant operator which provides suitable local approximation orders in $L^q$-norm, for any $1\le q\le\infty$. 

For each coordinate direction $i=1,\dots,d$, we assume that the sequence of knot vectors $\{\Xi_{p_i,n_i^\ell}\}_{\ell\in\NN_0}$ satisfies:
\begin{itemize}
 \item The sequence of meshes $\{Z_{p_i,n_i^\ell}\}_{\ell\in\NN_0}$ is locally quasi-uniform with parameter $\theta_i>0$.
 We let $\theta :=\max_{i=1,\dots,d} \theta_i.$
 \item The meshsize in each direction is at least halved when moving from a level to the next one, i.e.,
\begin{equation}\label{E:dyadic}
 h_{{\ell+1},i}\le \frac12 h_{\ell,i},
\end{equation}
for all $\ell\in\NN_0$, where $h_{\ell,i}$ is the maximum length of the intervals in $\II(\Xi_{p_i,n_i^\ell})$.
\end{itemize}

\begin{remark}\label{R:Hypotesis on knot vectors}
Notice that if the initial knot vectors for each direction, i.e., $\Xi_{p_1,n_1^0}$, $\Xi_{p_2,n_2^0}$, $\dots$, $\Xi_{p_d,n_d^0}$, are chosen arbitrarily and we perform dyadic refinement for obtaining the subsequent levels, we will obtain a sequence of meshes which satisfies the two conditions just stated. 
\end{remark}

In order to define the multiscale quasi-interpolant operator in the hierarchical space, we need to introduce first some \emph{local} quasi-interpolant operators $P_\ell$, for $\ell=0,1,\dots,n-1$, satisfying certain suitable properties. These last operators can be defined using the ideas from~\cite{LLM01}, where each operator is defined using a underlying local approximation method. 

\paragraph{A local approximation method.} 
We recall that ${\bf p}:=(p_1,p_2,\dots,p_d)$ and denote by $\mathbb{P}_{\bf p}$ the set of tensor-product polynomials with degree at most $p_i$ in the coordinate direction $x_i$, for $i=1,2,\dots,d$. Let $N:=\dim\mathbb{P}_{\bf p}=\Pi_{i=1}^d (p_i+1)$. 

Let $\ell\in\NN_0$ be arbitrary and fixed. For $Q\in\QQ_\ell$ given, we consider the basis $\BB_Q:=\{\beta_1^Q,\dots,\beta_{N}^Q\}$ of $\mathbb{P}_{\bf p}$, consisting of the B-spline basis functions in $\BB_\ell$ which are nonzero on~$Q$.
 
 Let $\Pi_Q:L^1(Q)\to \mathbb{P}_{\bf p}$ be the $L^2$-projection operator defined by
\begin{equation}\label{E:local L2 projection}
 \int_Q (f-\Pi_Q f) g = 0, \qquad\forall\, g\in\mathbb{P}_{\bf p}.
 \end{equation}
 
Notice that 
\begin{equation}\label{E:dual basis}
 \Pi_Q f =\sum_{i=1}^N \lambda_i^Q (f) \beta_i^Q,\qquad\forall\,f\in L^1(Q),
\end{equation}
where $\lambda^Q(f):=(\lambda_1^Q (f),\dots,\lambda_N^Q (f))^T$ is the solution of the linear system
\begin{equation*}
 M_Q {\bf x} = F_Q,
\end{equation*}
where 
\begin{equation*}
 M_Q= \left(\int_Q \beta_j^Q\beta_i^Q\right)_{i,j=1,\dots,N}\in\RR^{N\times N},\quad\text{and}\quad
  F_Q=\left(\int_Q f\beta_i^Q\right)_{i=1,\dots,N}\in\RR^{N\times 1}.
 \end{equation*}
 
Since $\Pi_Q$ preserves polynomials in $\mathbb{P}_{\bf p}$, we have that $\{\lambda_i^Q:L^1(Q)\to \RR\,|\,i=1,\dots,N\}$ is a dual basis for $\BB_Q$ in the the sense that 
\begin{equation}\label{E:Duality for local functionals}
\lambda_i^Q(\beta_j^Q)=\begin{cases}1,\qquad \text{if }i=j\\ 0,\qquad \text{if }i\neq j\end{cases},\qquad i,j=1,\dots,N. 
\end{equation}

As a consequence of the $L^\infty$-local stability of the B-spline basis we have the following result (cf.~\cite{BGGS15}).
\begin{lemma}\label{L:bound for the dual basis}
Let $q$ be such that $1\le q\le \infty$. Let $Q\in\QQ_\ell$ and let $\Pi_Q:L^1(Q)\to \mathbb{P}_{\bf p}$ be the $L^2$-projection operator defined by~\eqref{E:local L2 projection}. Then, there exists a constant $C_{{\bf p},\theta}>0$ which depends only on ${\bf p}$ and $\theta$ such that
 \begin{equation*}
  \|\lambda^Q(f)\|_\infty\le C_{{\bf p},\theta}|Q|^{-\frac1q}\|f\|_{L^q(Q)},\qquad\forall\,f\in L^q(Q),
 \end{equation*}
where $\lambda^Q(f) = (\lambda_1^Q (f),\dots,\lambda_N^Q (f))^T$ are the coefficients of $\Pi_Q(f)$ with respect to the local basis $\BB_Q$ as given~\eqref{E:dual basis}.
\end{lemma}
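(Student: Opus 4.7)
The plan is to deduce the bound by chaining three standard estimates: the $L^\infty$-local stability of the B-spline basis, an inverse inequality for polynomials on a box, and the $L^q$-stability of $\Pi_Q$ on $\mathbb{P}_{\bf p}$. Throughout, all intermediate constants must depend only on ${\bf p}$ and $\theta$, which is the whole point of the local quasi-uniformity assumption.

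First, I would invoke the $L^\infty$-local stability of the B-spline basis (as stated in \cite{BGGS15}): there is a constant depending only on ${\bf p}$ and $\theta$ such that for every polynomial $g = \sum_{i=1}^N c_i\,\beta_i^Q \in \mathbb{P}_{\bf p}$,
$$\max_{1\le i\le N}|c_i| \le C\,\|g\|_{L^\infty(Q)}.$$
Applied to $g=\Pi_Q f$, this gives $\|\lambda^Q(f)\|_\infty \le C\,\|\Pi_Q f\|_{L^\infty(Q)}$. Second, since $\Pi_Q f$ is a tensor-product polynomial of bounded degree on the box $Q$, the usual inverse inequality (obtained by affine rescaling to $[0,1]^d$ and finite-dimensional norm equivalence on $\mathbb{P}_{\bf p}$) yields
$$\|\Pi_Q f\|_{L^\infty(Q)} \le C_{\bf p}\,|Q|^{-1/q}\,\|\Pi_Q f\|_{L^q(Q)}.$$
Third, I would use $L^q$-stability of the $L^2$-projection: $\|\Pi_Q f\|_{L^q(Q)} \le C_{{\bf p},\theta}\,\|f\|_{L^q(Q)}$. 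This step is handled by writing $\lambda_i^Q(f) = \int_Q f\,\psi_i^Q$ for the $L^2$-dual basis $\{\psi_i^Q\}\subset\mathbb{P}_{\bf p}$ of $\BB_Q$, applying Hölder, expanding $\Pi_Q f$ in $\BB_Q$, and controlling the resulting $L^q$ and $L^{q'}$ norms of $\beta_i^Q$ and $\psi_i^Q$ by scaling to the reference cube. Chaining the three inequalities produces the asserted estimate.

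The main obstacle is the uniformity of the constants in step~1 (and implicitly in step~3) with respect to the local knot configuration around $Q$. A priori, the shapes of $\beta_i^Q$ and of the dual functions $\psi_i^Q$, after rescaling $Q$ to $[0,1]^d$, depend on all neighboring knot spacings, and nothing prevents the stability constant from blowing up when these spacings degenerate. The local quasi-uniformity assumption with parameter $\theta$ confines these rescaled configurations to a compact set, on which the stability constants can be bounded uniformly by a continuity/compactness argument. I would not redo this here, since it is precisely the content of the cited result in \cite{BGGS15}, and would simply quote it.
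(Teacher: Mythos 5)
Your argument is correct and follows the same route as the paper, which in fact gives no proof at all: the lemma is stated as a direct consequence of the $L^\infty$-local stability of the B-spline basis with a reference to \cite{BGGS15}. Your three-step chain (local stability of the coefficients, inverse estimate for $\mathbb{P}_{\bf p}$ on the box $Q$, $L^q$-stability of $\Pi_Q$) is the standard way to make that derivation explicit, and you correctly isolate the only nontrivial point---uniformity of the constants over the local knot configurations, governed by $\theta$---which both you and the authors delegate to the cited reference.
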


\paragraph{A locally supported dual basis.}
 For $\ell=0,1,\dots,n-1$, we define $\omega_\ell$ as the union of the elements of level $\ell$ whose support extension is contained in $\Omega_\ell$, i.e.,
\begin{equation*}
\omega_\ell:=\bigcup_{\substack{Q\in\QQ_\ell\\ \tilde Q\subset\Omega_\ell}} Q,
\end{equation*}
where $\tilde Q$ denotes the support extension of $Q$, given by $\tilde Q =\D\bigcup_{\substack{\beta\in\BB_\ell\\\supp\beta\supset Q}}\supp\beta$. In other words, $\omega_\ell$ consists of the elements of level $\ell$ where the full tensor-product space of level $\ell$ can be exactly represented in the hierarchical space.

Let 
\begin{equation}\label{E:B ell omega ell}
 \BB_{\ell,{\omega_\ell}}:=\{\beta\in\BB_\ell\,|\,\exists\,Q\in\QQ_\ell\,:\,Q\subset\supp\beta\cap\omega_\ell\},
\end{equation}
and
\begin{equation*}
\VV_{\omega_\ell}:=\Span\BB_{\ell,{\omega_\ell}}. 
\end{equation*}

We remark that $\BB_{\ell,{\omega_\ell}}\subset \{\beta\in\BB_\ell\,|\,\supp\beta\subset\Omega_\ell\}$, but in general, $\BB_{\ell,{\omega_\ell}}\varsubsetneq \{\beta\in\BB_\ell\,|\,\supp\beta\subset\Omega_\ell\}$, see Figure~\ref{F:example} below. The goal of this paragraph is to define a dual basis for the multivariate B-spline basis $\BB_{\ell,{\omega_\ell}}$, i.e., a set of linear functionals
$$\{\lambda_\beta:L^1(\Omega)\to\RR\,|\,\beta\in\BB_{\ell,{\omega_\ell}}\},$$
such that $\lambda_{\beta_i}(\beta_j) = \begin{cases}1,\qquad \text{if }i=j\\ 0,\qquad \text{if }i\neq j\end{cases}$, for all $\beta_i,\beta_j\in\BB_{\ell,{\omega_\ell}}$.  We will use the technique presented in~\cite{LLM01} together with the local $L^2$-projection defined in~\eqref{E:local L2 projection}. Roughly speaking, we define the functional $\lambda_\beta$ as a local projection onto some $Q_\beta\in\QQ_\ell$ such that $Q_\beta\subset\supp\beta\cap\omega_\ell$. More precisely, for each $\beta\in\BB_{\ell,{\omega_\ell}} $, we choose $Q_\beta\in\QQ_\ell$ such that $Q_\beta\subset\supp\beta\cap\omega_\ell$ and let 
$$\lambda_\beta:=\lambda_{i_0}^{Q_\beta},$$ where $i_0=i_0(\beta,Q_\beta)$ with $1\le i_0\le N$ is such that $\beta_{i_0}^{Q_\beta}\equiv \beta$ on $Q_\beta$. 

As an immediate consequence of~\eqref{E:Duality for local functionals} and Lemma~\ref{L:bound for the dual basis} we have the following result.

\begin{proposition}\label{P:proposition} Let $\{\lambda_\beta:L^1(\Omega)\to\RR\,|\,\beta\in\BB_{\ell,{\omega_\ell}}\}$ be the set of linear functionals just defined above. Then, the following properties hold:
\begin{enumerate}[(i)]
 \item \emph{Local support:} If $Q_\beta$ denotes the element in $\QQ_\ell$ chosen for the definition of $\lambda_\beta$, then $\lambda_\beta$ is \emph{supported} in $Q_\beta$, i.e.,
\begin{equation*}
 \forall\,f\in L^q(\Omega),\quad f_{|_{Q_\beta}} \equiv 0\qquad\Longrightarrow\qquad \lambda_\beta(f) = 0.
\end{equation*}
\item \emph{Dual basis:} For $\beta_i,\beta_j\in\BB_{\ell,{\omega_\ell}}$, $\lambda_{\beta_i}(\beta_j)=\begin{cases}1,\qquad \text{if }i=j\\ 0,\qquad \text{if }i\neq j\end{cases}$. 
\item \emph{$L^q$-Stability:} If $\beta\in\BB_{\ell,{\omega_\ell}}$ and $f\in L^q(Q_\beta)$,
               \begin{equation}\label{E:Stability of lambda tau}                             
                            |\lambda_\beta(f)|\le C_{{\bf p},\theta} |Q_\beta|^{-\frac1q}\|f\|_{L^q(Q_\beta)}.\end{equation}
                             
\end{enumerate}                       
\end{proposition}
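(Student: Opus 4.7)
The three statements should all follow quickly from the construction of $\lambda_\beta$ as one component of the local $L^2$-projection $\Pi_{Q_\beta}$ together with Lemma~\ref{L:bound for the dual basis} and the local duality~\eqref{E:Duality for local functionals}.

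For part (i), the plan is to unwind the definition of $\lambda_\beta = \lambda_{i_0}^{Q_\beta}$ given by~\eqref{E:dual basis}: the coefficients $\lambda_i^{Q_\beta}(f)$ are obtained by solving the linear system $M_{Q_\beta}\mathbf{x}=F_{Q_\beta}$, whose right-hand side entries are the integrals $\int_{Q_\beta} f\,\beta_i^{Q_\beta}$. If $f$ vanishes on $Q_\beta$, these integrals are all zero, so $\mathbf{x}=0$ and in particular $\lambda_\beta(f)=0$.

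Part (ii) is the only one with a real subtlety, and it is the one I expect to require the most care. Fix $\beta_i,\beta_j\in\BB_{\ell,\omega_\ell}$ and write $\lambda_{\beta_i}=\lambda_{k_0}^{Q_{\beta_i}}$ where $\beta_{k_0}^{Q_{\beta_i}}\equiv\beta_i$ on $Q_{\beta_i}$. Split on whether $Q_{\beta_i}\subset\supp\beta_j$. If not, then $\beta_j\equiv 0$ on $Q_{\beta_i}$ and part~(i) forces $\lambda_{\beta_i}(\beta_j)=0$; this is the right answer whenever $i\neq j$ and the supports are disjoint on the cell $Q_{\beta_i}$. Otherwise $\beta_j\in\BB_{Q_{\beta_i}}$, so on $Q_{\beta_i}$ it coincides with a unique element $\beta_m^{Q_{\beta_i}}$ of the local basis; here the key point, which I must check, is that the assignment $\beta\mapsto\beta^{Q_{\beta_i}}_{(\cdot)}$ restricted to B-splines of $\BB_\ell$ nonzero on $Q_{\beta_i}$ is injective — distinct B-splines in $\BB_\ell$ with $Q_{\beta_i}$ in their support restrict to distinct elements of $\BB_{Q_{\beta_i}}$. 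Granted this, $m=k_0$ exactly when $\beta_j=\beta_i$ (i.e. $i=j$), and~\eqref{E:Duality for local functionals} yields $\lambda_{k_0}^{Q_{\beta_i}}(\beta_m^{Q_{\beta_i}})=\delta_{k_0,m}=\delta_{ij}$.

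Part (iii) is immediate: since $\lambda_\beta(f)$ is one component of the vector $\lambda^{Q_\beta}(f)$, we have $|\lambda_\beta(f)|\le\|\lambda^{Q_\beta}(f)\|_\infty$, and Lemma~\ref{L:bound for the dual basis} applied to $f_{|Q_\beta}\in L^q(Q_\beta)$ gives the bound~\eqref{E:Stability of lambda tau} with the same constant $C_{\mathbf{p},\theta}$. Overall the proof is short; the single substantive step is the injectivity remark inside part~(ii), which is a direct consequence of the fact that distinct tensor-product B-splines in $\BB_\ell$ have distinct local knot vectors and hence define distinct polynomial pieces on any common cell in their support.
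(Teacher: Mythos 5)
Your proposal is correct and follows exactly the route the paper intends: the paper simply declares the proposition ``an immediate consequence'' of the local duality~\eqref{E:Duality for local functionals} and Lemma~\ref{L:bound for the dual basis}, and your argument fills in precisely those details (vanishing of the right-hand side $F_{Q_\beta}$ for (i), the duality relation for (ii), and the componentwise bound for (iii)). The injectivity point you flag in (ii) is indeed the only step requiring a word of justification, and it follows from the local linear independence of the B-splines on $Q_{\beta_i}$ as stated in Section~\ref{S:tensor product}.
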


\paragraph{Localized quasi-interpolant operators in the tensor-product spaces.} 

Now, we are in position of defining a quasi-interpolant operator for each level $\ell$, using the dual bases introduced in the previous paragraph. For $\ell =0,1,\dots,n-1$, let $P_\ell: L^q(\Omega)\to\Span\BB_{\ell,{\omega_\ell}}\subset \VV_\ell$ be given by
\begin{equation}\label{E:quasi-interpolant l}
P_\ell f:=\sum_{\beta\in \BB_{\ell,{\omega_\ell}}}\lambda_\beta(f)\beta,\qquad \forall\,f\in L^q(\Omega).
\end{equation}
           
The next result summarizes the main properties of $P_\ell$.

\begin{theorem}\label{T:properties of Pl}
For $\ell =0,1,\dots,n-1$, let $P_\ell$ be the operator given by~\eqref{E:quasi-interpolant l}. Then, the following properties hold:
\begin{enumerate}[(i)]
 \item $P_\ell$ preserves splines in $\VV_{\omega_\ell}$, i.e., $P_\ell s = s$, for all $s\in\VV_{\omega_\ell}.$

 \item $P_\ell$ is supported in $\omega_\ell$, i.e., 
 \begin{equation}\label{E:Pl supported in omega_l}
 \forall\,f\in L^q(\Omega),\quad f_{|_{\omega_\ell}} \equiv 0\qquad\Longrightarrow\qquad P_\ell f \equiv 0.
\end{equation}

\item For all $s\in\VV_\ell$, 
$$P_\ell s\equiv s, \qquad\text{ on }\omega_\ell.$$
 
 \item \emph{Stablility}: The quasi-interpolant operator $P_\ell$ satisfies 
\begin{equation}\label{E:Stability of Pk}
\|P_\ell f\|_{L^q(\Omega_\ell)}\le C_S\|f\|_{L^q(\omega_\ell)}, \qquad\forall\,f\in L^q(\omega_\ell),
\end{equation}
where the constant $C_S>0$ only depends on ${\bf p}$ and $\theta$.
\item \emph{Approximation}: Let ${\bf s}:=(s_1,s_2,\dots,s_d)$ be such that $s_i\le p_i+1$, for $i=1,2,\dots,d$. For $f\in L^{\bf s}_q(\Omega):=\{g\in L^1_{\loc}(\Omega)\,|\, D^{r_i}_{x_i} g\in L^q(\Omega),\,0\le r_i\le s_i,\,i=1,\dots,d\}$,
\begin{equation}\label{E:Approximation of Pk}
\|f-P_\ell f\|_{L^q(\omega_\ell)}\le C_A \sum_{i=1}^d h_{\ell,i}^{s_i}\|D^{s_i}_{x_i} f\|_{L^q(\Omega_\ell)}, 
\end{equation}
 where the constant $C_A>0$ depends on $d$, ${\bf s}$, ${\bf p}$ and $\theta$.
\end{enumerate}
 
\end{theorem}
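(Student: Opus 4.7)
My plan is to handle the five items in the order listed, since (i)--(iii) are algebraic statements that feed directly into the quantitative estimates (iv)--(v). Throughout I would lean on the dual basis property and local support from Proposition~\ref{P:proposition}, plus the observation that $\omega_\ell$ is a union of cells in $\QQ_\ell$ and that each $Q_\beta$ lies in $\omega_\ell$ by construction.

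For (i), I would expand $s \in \VV_{\omega_\ell}$ uniquely as $s = \sum_{\beta \in \BB_{\ell,\omega_\ell}} c_\beta \beta$ (uniqueness follows because $\BB_{\ell,\omega_\ell} \subset \BB_\ell$ is linearly independent), apply $\lambda_{\beta'}$, and invoke the dual basis relation to read off $\lambda_{\beta'}(s)=c_{\beta'}$, whence $P_\ell s = s$. Item (ii) is immediate: each $Q_\beta \subset \omega_\ell$, so $f \equiv 0$ on $\omega_\ell$ forces $f|_{Q_\beta} \equiv 0$ and then the local support property of $\lambda_\beta$ gives $\lambda_\beta(f)=0$ for every $\beta \in \BB_{\ell,\omega_\ell}$.

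The core step is (iii). Expanding $s = \sum_{\beta \in \BB_\ell} c_\beta \beta$ and fixing $\beta' \in \BB_{\ell,\omega_\ell}$, I would argue that any $\beta \in \BB_\ell$ which is not identically zero on $Q_{\beta'}$ must satisfy $Q_{\beta'} \subset \supp\beta \cap \omega_\ell$; this is because a B-spline restricted to a cell is either the zero polynomial or a polynomial with that cell in its support. Hence every $\beta$ contributing to $\lambda_{\beta'}(s)$ lies in $\BB_{\ell,\omega_\ell} \cap \BB_{Q_{\beta'}}$, and the dual basis property isolates $c_{\beta'}$. Consequently $P_\ell s = \sum_{\beta \in \BB_{\ell,\omega_\ell}} c_\beta \beta$, and the omitted part $\sum_{\beta \notin \BB_{\ell,\omega_\ell}} c_\beta \beta$ vanishes on $\omega_\ell$ by the same cell argument applied at an arbitrary cell $Q \subset \omega_\ell$.

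For stability (iv), I would work cell by cell. For $Q \in \QQ_\ell$ with $Q \subset \Omega_\ell$, only the at most $N$ functions $\beta \in \BB_{\ell,\omega_\ell}$ with $Q \subset \supp\beta$ contribute to $P_\ell f|_Q$; combining $\|\beta\|_{L^\infty}\le 1$ with the $L^q$-stability of $\lambda_\beta$ from Proposition~\ref{P:proposition}(iii), and noting that $Q$ and $Q_\beta$ both lie inside the bounded-diameter set $\supp\beta$ so that $|Q|\sim|Q_\beta|$ by local quasi-uniformity, gives $\|P_\ell f\|_{L^q(Q)}^q \le C\sum_{\beta:\,Q\subset\supp\beta}\|f\|_{L^q(Q_\beta)}^q$. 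Summing over $Q \subset \Omega_\ell$ and using bounded overlap of the neighborhoods together with $Q_\beta \subset \omega_\ell$ yields~\eqref{E:Stability of Pk}. Finally for (v), I would, on each $Q \subset \omega_\ell$, apply a tensor-product Bramble--Hilbert estimate on the support extension $\tilde Q$ to produce $\pi \in \mathbb{P}_{\bf p}$ with $\|f-\pi\|_{L^q(\tilde Q)} \le C\sum_i h_{\ell,i}^{s_i}\|D^{s_i}_{x_i}f\|_{L^q(\tilde Q)}$. Since $\mathbb{P}_{\bf p}\subset\VV_\ell$, part (iii) gives $P_\ell\pi=\pi$ on $Q$, so $\|f-P_\ell f\|_{L^q(Q)}\le \|f-\pi\|_{L^q(Q)}+\|P_\ell(f-\pi)\|_{L^q(Q)}$, and the local version of the stability estimate from (iv) bounds the second term by $C\|f-\pi\|_{L^q(\tilde Q)}$. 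Summing $q$-th powers over $Q \subset \omega_\ell$ and using $\tilde Q \subset \Omega_\ell$ (by definition of $\omega_\ell$) together with bounded overlap of the extensions gives~\eqref{E:Approximation of Pk}. The main obstacle I anticipate is carefully justifying (iii)---specifically the implication that a B-spline not in $\BB_{\ell,\omega_\ell}$ vanishes identically on $\omega_\ell$---since everything else is then a routine combination of local quasi-uniformity, polynomial reproduction, and Bramble--Hilbert.
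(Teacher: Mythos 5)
Your proposal is correct and follows essentially the same route as the paper: (i) from the dual-basis relation, (ii) from the local support of the functionals, (iii) by splitting off the B-splines outside $\BB_{\ell,\omega_\ell}$ (which vanish on $\omega_\ell$), (iv) via the cell-wise bound combining $L^q$-stability of $\lambda_\beta$ with local quasi-uniformity, and (v) via polynomial approximation on $\tilde Q$ plus (iii) and the local stability bound. The only difference is that you spell out the vanishing argument behind (iii), which the paper compresses into ``a consequence of (i) and (ii).''
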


\begin{proof}
(i) This is an immediate consequence of Proposition~\ref{P:proposition} (ii).

\medskip

(ii) This follows from~\eqref{E:Stability of lambda tau} and~\eqref{E:quasi-interpolant l}.

\medskip

(iii) This is a consequence of (i) and (ii).

\medskip

 (iv) Let $Q\in\QQ_\ell$ such that $Q\subset\Omega_\ell$. Taking into account~\eqref{E:Stability of lambda tau} and~\eqref{E:partition of unity in B_0} we have that
\begin{equation*}
 |P_\ell f|\le\max_{\substack{\beta\in\BB_\ell\\\supp\beta\supset Q}}|\lambda_\beta(f)|\le C |Q|^{-\frac1q}\|f\|_{L^q(\tilde Q\cap\omega_\ell)},\qquad\text{on }Q,
\end{equation*}
for all $f\in L^q(\omega_\ell)$, where $\tilde Q =\D\bigcup_{\substack{\beta\in\BB_\ell\\\supp\beta\supset Q}}\supp\beta$. Here, the constant $C>0$ depends on ${\bf p}$ and $\theta$. Then,
 \begin{equation}\label{E:Stability aux}
\|P_\ell f \|_{L^q(Q)}\le C \|f\|_{L^q(\tilde Q\cap\omega_\ell)}.  
 \end{equation}
 Now,~\eqref{E:Stability of Pk} follows from the last equation. 
 
 \medskip
 
 (v) 
Let $Q\in\QQ_\ell$ such that $Q\subset\omega_\ell$. By results on multidimensional Taylor expansions, there exists $p_{\tilde Q}\in\mathbb{P}_{\bf p}$ such that
\begin{equation}\label{E:Taylor}
 \|f-p_{\tilde Q}\|_{L^q(\tilde Q)}\le C_T \sum_{i=1}^d h_{\ell,i}^{s_i}\|D^{s_i}_{x_i} f\|_{L^q(\tilde Q)},
\end{equation}
where the constant $C_T>0$ only depends on $d$, ${\bf s}$, ${\bf p}$ and $\theta$. Taking into account~\eqref{E:Stability aux}, (iii) and~\eqref{E:Taylor} we have that

 \begin{align*}
       \|f-P_\ell f\|_{L^q(Q)}&\le \|f-p_{\tilde Q}\|_{L^q(Q)}+\|p_{\tilde Q}-P_\ell f\|_{L^q(Q)}\\
       &= \|f-p_{\tilde Q}\|_{L^q(Q)}+\|P_\ell(p_{\tilde Q}- f)\|_{L^q(Q)}\\
       &\le (1+C)\|f-p_{\tilde Q}\|_{L^q(\tilde Q)}\\
       &\le (1+C)C_T \sum_{i=1}^d h_{\ell,i}^{s_i}\|D^{s_i}_{x_i} f\|_{L^q(\tilde Q)}.
      \end{align*}
Now,~\eqref{E:Approximation of Pk} follows.
\end{proof}

\paragraph{A multiscale quasi-interpolant operator.}

Let $P_\ell$ be the operator given by~\eqref{E:quasi-interpolant l}, for each $\ell=0,1,\dots,n-1$. We define $\qih:L^q(\Omega)\to\Span\HH$ by
\begin{equation}\label{E:definition qihh}
\begin{cases}\qih_0:=P_0,\\
 {\qih}_{\ell+1}:= \qih_\ell+P_{\ell+1}(\id-\qih_{\ell}),\qquad\ell=0,\dots,n-2.\\
 \qih:=\qih_{n-1}.\end{cases}
\end{equation}

\begin{remark}
Notice that Theorem~\ref{T:properties of Pl} (i) implies that $P_0s=s$, for all $s\in\VV_0$. Thus, as an immediate consequence of the definition of $\qih$ given in~\eqref{E:definition qihh}, we have that 
\begin{equation*}
 \qih s = s, \qquad\forall\,s\in\VV_0,
\end{equation*}
i.e., $\qih$ preserves splines in the initial level and in particular, tensor-product polynomials in ${\mathbb P}_{\bf p}$.
\end{remark}

\begin{remark}\label{R: Image of Pi}
 Taking into account the definitions of $\qih$ and $P_\ell$ given by~\eqref{E:definition qihh} and~\eqref{E:quasi-interpolant l}, respectively, we have that
 \begin{equation*}
  \qih:L^q(\Omega)\to\sum_{\ell=0}^{n-1} \VV_{\omega_\ell}:=\left\{\sum_{\ell=0}^{n-1}s_\ell\,\mid\,s_\ell\in\VV_{\omega_\ell},\quad \ell=0,1,\dots,n-1\right\}. 
 \end{equation*}
 \end{remark}

When restricting to a fixed $\omega_\ell$, the following result allows to write the multiscale quasi-interpolant $\qih$ in terms of the operators $P_\ell$, $P_{\ell+1}$,$\dots$,$P_{n-1}$.

\begin{theorem}\label{T:decomposition of Pi}
 If
 $$\omega_{n-1}\subset\omega_{n-2}\subset\dots\subset\omega_2\subset\omega_1\subset\omega_0,$$ then 
 \begin{equation*}
\qih f= P_\ell f+\sum_{k=\ell+1}^{n-1}P_k(f-P_{k-1}f),\qquad\text{on }\omega_\ell,\qquad (\ell=0,1,\dots,n-1), 
 \end{equation*}
 for $f\in L^q(\Omega)$.
\end{theorem}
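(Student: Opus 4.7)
The plan is a double induction: first an auxiliary lemma that $\qih_m f \equiv P_m f$ on $\omega_m$ for every $m$, then an induction on the upper limit $m$ showing the claimed decomposition for $\qih_m$ restricted to $\omega_\ell$, and finally taking $m=n-1$.

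\medskip

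\noindent\textbf{Step 1 (auxiliary identity on $\omega_m$).} I would first prove by induction on $m$ that
\begin{equation*}
 \qih_m f \equiv P_m f \quad \text{on } \omega_m, \qquad m=0,1,\dots,n-1.
\end{equation*}
For $m=0$ this is the definition $\qih_0=P_0$. For the step, use $\qih_m = \qih_{m-1}+P_m(\id-\qih_{m-1})$ and observe that by Remark~\ref{R: Image of Pi} the image of $\qih_{m-1}$ lies in $\sum_{k<m}\VV_{\omega_k}\subset\VV_{m-1}\subset\VV_m$. Then property (iii) of Theorem~\ref{T:properties of Pl} gives $P_m(\qih_{m-1}f)\equiv \qih_{m-1}f$ on $\omega_m$, so the two $\qih_{m-1}f$ terms cancel on $\omega_m$ and only $P_m f$ remains.

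\medskip

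\noindent\textbf{Step 2 (main induction on $m$, for fixed $\ell$).} Fix $\ell\in\{0,\dots,n-1\}$. I would prove by induction on $m\ge\ell$ that
\begin{equation*}
 \qih_m f \equiv P_\ell f + \sum_{k=\ell+1}^{m} P_k(f-P_{k-1}f) \quad \text{on } \omega_\ell.
\end{equation*}
The base case $m=\ell$ is exactly Step~1. For the inductive step, write
\begin{equation*}
 \qih_{m+1}f = \qih_m f + P_{m+1}\bigl(f-\qih_m f\bigr),
\end{equation*}
and note that by Step~1, $\qih_m f\equiv P_m f$ on $\omega_m$, hence (thanks to the hypothesis $\omega_{m+1}\subset\omega_m$) also on $\omega_{m+1}$. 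The support property~\eqref{E:Pl supported in omega_l} of $P_{m+1}$ implies that $P_{m+1}$ depends only on values on $\omega_{m+1}$ (apply it to the difference of two functions agreeing there), so
\begin{equation*}
 P_{m+1}(f-\qih_m f) \equiv P_{m+1}(f-P_m f) \quad \text{everywhere on } \Omega.
\end{equation*}
Plugging this identity into the recursion and invoking the inductive hypothesis on $\omega_\ell$ completes the step. Taking $m=n-1$ yields the claim since $\qih=\qih_{n-1}$.

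\medskip

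\noindent\textbf{Main obstacle.} The only delicate point is the reduction $P_{m+1}(f-\qih_m f)=P_{m+1}(f-P_m f)$: it is what converts the nested recursive definition of $\qih$ into the telescoping sum on $\omega_\ell$, and it critically uses both the coincidence of $\qih_m$ with $P_m$ on $\omega_m$ (Step~1) and the nesting assumption $\omega_{m+1}\subset\omega_m$ together with the locality of $P_{m+1}$. Everything else is an unrolling of the definition.
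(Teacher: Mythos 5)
Your proposal is correct and follows essentially the same route as the paper: Step~1 is exactly the paper's Lemma~\ref{L:qihh aux} (the identity $\qih_m f=P_m f$ on $\omega_m$, via Theorem~\ref{T:properties of Pl}(iii) and the fact that $\qih_{m-1}f\in\VV_m$), and Step~2 is the paper's unrolling of the recursion combined with the replacement $P_{k}(f-\qih_{k-1}f)=P_{k}(f-P_{k-1}f)$, justified by the support property~\eqref{E:Pl supported in omega_l} and the nesting $\omega_{k}\subset\omega_{k-1}$. The only cosmetic difference is that you phrase the unrolling as an induction on the upper index, which changes nothing of substance.
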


Before proving Theorem~\ref{T:decomposition of Pi} we state the following elementary result.

\begin{lemma}\label{L:qihh aux}
For $f\in L^q(\Omega)$,
 $$\qih_\ell f=P_\ell f,\qquad\text{on }\omega_\ell,\qquad (\ell=0,1,\dots,n-1).$$
 \end{lemma}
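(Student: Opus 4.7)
My plan is to prove the lemma by induction on $\ell$, with the base case $\ell=0$ being immediate since $\qih_0 := P_0$ by definition, so the equality holds everywhere on $\Omega$, in particular on $\omega_0$.

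For the inductive step, suppose the statement has been established for $\ell-1$ (in fact, as we shall see, we will not actually need the inductive hypothesis itself, only a structural consequence of the construction). Unfolding the recursion,
\begin{equation*}
\qih_\ell f = \qih_{\ell-1}f + P_\ell\bigl(f - \qih_{\ell-1}f\bigr) = \qih_{\ell-1}f + P_\ell f - P_\ell\qih_{\ell-1}f.
\end{equation*}
The goal is to show that the first and third terms cancel on $\omega_\ell$. For this I will invoke Theorem~\ref{T:properties of Pl}(iii), which asserts that $P_\ell s \equiv s$ on $\omega_\ell$ for every $s\in\VV_\ell$. So it suffices to verify that $\qih_{\ell-1}f\in\VV_{\ell-1}$, because then the nestedness $\VV_{\ell-1}\subset\VV_\ell$ places $\qih_{\ell-1}f$ in $\VV_\ell$, and the third term becomes $\qih_{\ell-1}f$ on $\omega_\ell$, canceling the first. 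The remaining $P_\ell f$ yields the desired identity on $\omega_\ell$.

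The membership $\qih_k f\in\VV_k$ is itself proved by a short side induction: $\qih_0 f = P_0 f\in\Span\BB_{0,\omega_0}\subset\VV_0$ by the definition~\eqref{E:quasi-interpolant l}; and if $\qih_{k-1}f\in\VV_{k-1}\subset\VV_k$, then $P_k(f-\qih_{k-1}f)\in\Span\BB_{k,\omega_k}\subset\VV_k$, so $\qih_k f = \qih_{k-1}f + P_k(f-\qih_{k-1}f)\in\VV_k$.

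I do not expect any serious obstacle here. The only point requiring attention is to resist the temptation to use the assumption $\omega_{n-1}\subset\cdots\subset\omega_0$ from the ambient Theorem~\ref{T:decomposition of Pi}: this lemma is claimed without that hypothesis, and indeed the argument above needs only the local reproduction property of $P_\ell$ on $\omega_\ell$ applied to the spline $\qih_{\ell-1}f$, which lives in $\VV_{\ell-1}\subset\VV_\ell$ irrespective of how the $\omega_k$'s nest.
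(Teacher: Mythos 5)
Your argument is correct and coincides with the paper's own proof: both rest on the observation that $\qih_{\ell-1}f\in\VV_{\ell-1}\subset\VV_\ell$, so Theorem~\ref{T:properties of Pl}(iii) gives $P_\ell\qih_{\ell-1}f=\qih_{\ell-1}f$ on $\omega_\ell$, and the recursion then collapses to $P_\ell f$ there. Your extra side-induction justifying $\qih_k f\in\VV_k$ and your remark that no nesting of the $\omega_k$'s is needed are sound clarifications of points the paper leaves implicit.
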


\begin{proof}
Let $1\le \ell\le n-1$ and $f\in L^q(\Omega)$. Since $\qih_{\ell-1}f\in \VV_{\ell-1}\subset\VV_\ell$, by Theorem~\ref{T:properties of Pl} (iii),  we have that $P_\ell\qih_{\ell-1}f = \qih_{\ell-1}f$, on $\omega_\ell$. Now, the definition of $\qih_{\ell}$ yields $\qih_\ell f = \qih_{\ell-1}f+P_{\ell}(f-\qih_{\ell-1}f)=P_\ell f$, on $\omega_\ell$.
\end{proof}

\begin{proof}[Proof of Theorem~\ref{T:decomposition of Pi}]
 From the definition of $\qih$ given by~\eqref{E:definition qihh} we have that
 \begin{equation*}
  \qih  = \qih_\ell +\sum_{k=\ell+1}^{n-1} P_k(\id -\qih_{k-1}).
 \end{equation*}
Now, since $\omega_k\subset\omega_{k-1}$, using that $P_k$ is supported in $\omega_k$ (cf.~\eqref{E:Pl supported in omega_l}) and Lemma~\ref{L:qihh aux} we have that
\begin{equation*}
 \sum_{k=\ell+1}^{n-1} P_k(\id -\qih_{k-1})=\sum_{k=\ell+1}^{n-1} P_k(\id -P_{k-1}),
\end{equation*}
which concludes the proof.
\end{proof}

Now, we state and prove the main result of this section.

\begin{theorem}[Quasi-interpolation in hierarchical spline spaces]\label{T:interpolation in hierarchical spaces}
Assume that
 $$\omega_{n-1}\subset\omega_{n-2}\subset\dots\subset\omega_2\subset\omega_1\subset\omega_0.$$
 Let ${\bf s}:=(s_1,s_2,\dots,s_d)$ be such that $1\le s_i\le p_i+1$, for $i=1,2,\dots,d$. If $\qih:L^q(\Omega)\to\Span\HH$ is the multiscale quasi-interpolant given by~\eqref{E:definition qihh}, then,
 $$\|f-\qih f\|_{L^q(\omega_\ell)}\le C_A(1+2C_S) \sum_{i=1}^d h_{\ell,i}^{s_i}\|D^{s_i}_{x_i} f\|_{L^q(\Omega_\ell)},\qquad \ell=0,1,\dots,n-1,$$
 for $f\in L^{\bf s}_q(\Omega)$.
\end{theorem}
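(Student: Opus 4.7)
The plan is to apply Theorem~\ref{T:decomposition of Pi} to write, on $\omega_\ell$,
\[
 f-\qih f = (f-P_\ell f)-\sum_{k=\ell+1}^{n-1}P_k(f-P_{k-1}f),
\]
and then bound each piece by the triangle inequality. The first summand is controlled directly by the approximation property of $P_\ell$ (Theorem~\ref{T:properties of Pl}~(v)), which produces exactly the factor $C_A\sum_{i=1}^d h_{\ell,i}^{s_i}\|D^{s_i}_{x_i}f\|_{L^q(\Omega_\ell)}$. Hence the task reduces to bounding each tail term $\|P_k(f-P_{k-1}f)\|_{L^q(\omega_\ell)}$ by a quantity of the same form and summing.

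For a fixed $k\ge\ell+1$, I would first observe that every basis function used in the definition of $P_k$ lies in $\BB_{k,\omega_k}$, and by the inclusion noted just after~\eqref{E:B ell omega ell} each such basis function has support inside $\Omega_k$. Thus $P_k g$ vanishes outside $\Omega_k$, which yields
\[
 \|P_k(f-P_{k-1}f)\|_{L^q(\omega_\ell)}\le\|P_k(f-P_{k-1}f)\|_{L^q(\Omega_k)}.
\]
Next, the stability estimate~\eqref{E:Stability of Pk} for $P_k$ bounds this by $C_S\|f-P_{k-1}f\|_{L^q(\omega_k)}$, and since the nesting hypothesis gives $\omega_k\subset\omega_{k-1}$, the approximation estimate~\eqref{E:Approximation of Pk} applied to $P_{k-1}$ yields
\[
 \|P_k(f-P_{k-1}f)\|_{L^q(\omega_\ell)}\le C_S C_A\sum_{i=1}^d h_{k-1,i}^{s_i}\|D^{s_i}_{x_i}f\|_{L^q(\Omega_{k-1})}\le C_S C_A\sum_{i=1}^d h_{k-1,i}^{s_i}\|D^{s_i}_{x_i}f\|_{L^q(\Omega_\ell)},
\]
where I also used $\Omega_{k-1}\subset\Omega_\ell$.

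The final step is to resum over $k$ using the dyadic refinement assumption~\eqref{E:dyadic}, which iterated gives $h_{k-1,i}\le 2^{-(k-1-\ell)}h_{\ell,i}$, so that for each direction $i$
\[
 \sum_{k=\ell+1}^{n-1} h_{k-1,i}^{s_i}\le h_{\ell,i}^{s_i}\sum_{j=0}^{\infty}2^{-s_i j}=\frac{h_{\ell,i}^{s_i}}{1-2^{-s_i}}\le 2 h_{\ell,i}^{s_i},
\]
the last inequality holding because $s_i\ge 1$. Combining this with the first-term estimate gives the claimed constant $C_A(1+2C_S)$. I do not foresee any real obstacle; the only point that needs care is verifying that $P_k g$ vanishes outside $\Omega_k$ so that the stability estimate on $\Omega_k$ can be used to control the $L^q(\omega_\ell)$ norm, and then keeping the geometric-series bookkeeping clean enough to land the constant $1+2C_S$ rather than something $n$- or $k$-dependent.
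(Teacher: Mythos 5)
Your proposal is correct and follows essentially the same route as the paper: decompose via Theorem~\ref{T:decomposition of Pi}, use that $P_k$ produces functions supported in $\Omega_k$ to invoke the stability estimate~\eqref{E:Stability of Pk}, pass from $\omega_k$ to $\omega_{k-1}$ by the nesting hypothesis, apply~\eqref{E:Approximation of Pk}, and sum the geometric series using~\eqref{E:dyadic} and $s_i\ge 1$ to get the factor $1+2C_S$. The only (harmless) difference is that you write the support step as an inequality where the paper writes an equality; your version is if anything the more careful one.
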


\begin{proof}
 Let $f\in  L^{\bf s}_q(\Omega)$ and let $\ell$ be such that $0\le\ell\le n-1$. Then, using Theorem~\ref{T:decomposition of Pi},~\eqref{E:Stability of Pk}, that $\omega_k\subset\omega_{k-1}$,~\eqref{E:Approximation of Pk} and~\eqref{E:dyadic}, we have that
 \begin{align*}
  \|f-\qih f\|_{L^q(\omega_\ell)}&\le \|f-P_\ell f\|_{L^q(\omega_\ell)}+\sum_{k=\ell+1}^{n-1}\|P_k(f-P_{k-1}f)\|_{L^q(\omega_\ell)}\\
  &= \|f-P_\ell f\|_{L^q(\omega_\ell)}+\sum_{k=\ell+1}^{n-1}\|P_k(f-P_{k-1}f)\|_{L^q(\Omega_k)}\\
  &\le \|f-P_\ell f\|_{L^q(\omega_\ell)}+C_S\sum_{k=\ell+1}^{n-1}\|f-P_{k-1}f\|_{L^q(\omega_k)}\\
  &\le \|f-P_\ell f\|_{L^q(\omega_\ell)}+C_S\sum_{k=\ell+1}^{n-1}\|f-P_{k-1}f\|_{L^q(\omega_{k-1})}\\
  &\le C_A\left(\sum_{i=1}^d h_{\ell,i}^{s_i}\|D^{s_i}_{x_i} f\|_{L^q(\Omega_\ell)}+C_S\sum_{k=\ell+1}^{n-1}\sum_{i=1}^d h_{k-1,i}^{s_i}\|D^{s_i}_{x_i} f\|_{L^q(\Omega_{k-1})}\right)\\
  & \le C_A\sum_{i=1}^d\left(1+ C_S\sum_{k=\ell+1}^{n-1}\frac{1}{2^{(k-1-\ell)s_i}}\right)  h_{\ell,i}^{s_i}\|D^{s_i}_{x_i} f\|_{L^q(\Omega_\ell)}\\
  & \le C_A\sum_{i=1}^d\left(1+ C_S\sum_{k=0}^{\infty}\frac{1}{2^{ks_i}}\right)  h_{\ell,i}^{s_i}\|D^{s_i}_{x_i} f\|_{L^q(\Omega_\ell)}\\
  &= C_A\sum_{i=1}^d\left(1+ \frac{C_S}{1-2^{-s_i}}\right)h_{\ell,i}^{s_i}\|D^{s_i}_{x_i} f\|_{L^q(\Omega_\ell)}.
 \end{align*}
\end{proof}

\begin{remark}
If the hierarchy of subdomains ${\bf\Omega}_n := \{\Omega_0,\Omega_1,\dots,\Omega_n\}$ satisfies
\begin{equation}\label{E:strictly admissible mesh}
 \Omega_{\ell}\subset\omega_{\ell-1}, \qquad\ell=1,\dots,n,
\end{equation}
we say that the mesh $\QQ$ is \emph{strictly admissible} (of class $2$,~cf.~\cite{BuGi15}). In particular, if the mesh if strictly admissible, in view of~\cite[Proposition 20]{GJS14}, we have that the functions in the truncated basis~\cite{GJS12} which take non-zero values on any active cell belong to at most two different levels. 

Notice that if a mesh is strictly admissible then satisfies 
\begin{equation}\label{E:nesting of small omegas}
\omega_{n-1}\subset\omega_{n-2}\subset\dots\subset\omega_2\subset\omega_1\subset\omega_0.    
\end{equation}
On the other hand, in Figure~\ref{F:non admissible meshes} we show some non strictly admissible meshes which satify~\eqref{E:nesting of small omegas}.
\end{remark}

We conclude this section by applying Theorem~\ref{T:interpolation in hierarchical spaces} to the case of strictly admissible meshes. More precisely, we obtain optimal rates of convergence in each level of the hierarchical mesh when considering the asymptotic behavior (cf.~\cite[Example 2]{SM14}).    

\begin{corollary}\label{C:quasi interpolation}
Assume that each level is obtained by dyadic refinement of the elements of the previous one (see Remark~\ref{R:Hypotesis on knot vectors}). If the mesh is strictly admissible (cf.~\eqref{E:strictly admissible mesh}),  then,
\begin{equation*}
 \|f-\qih f\|_{L^q(\Omega_\ell)}\le C \sum_{i=1}^d h_{\ell,i}^{s_i}\|D^{s_i}_{x_i} f\|_{L^q(\Omega_{\ell-1})},\qquad (\ell=1,\dots,n),
\end{equation*}
 for all $f\in L^{\bf s}_q(\Omega)$, where the constant $C>0$ depends only on $d$, ${\bf s}$ and ${\bf p}$.
\end{corollary}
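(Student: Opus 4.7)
The plan is to derive the corollary essentially as a one-step consequence of Theorem~\ref{T:interpolation in hierarchical spaces} applied at level $\ell-1$, combined with the two hypotheses (strict admissibility and dyadic refinement).

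First I would fix $\ell\in\{1,\dots,n\}$ (the case $\ell=n$ is vacuous since $\Omega_n=\emptyset$) and invoke strict admissibility directly: by~\eqref{E:strictly admissible mesh} we have $\Omega_\ell\subset\omega_{\ell-1}$, and the remark preceding the corollary already records that strict admissibility implies the nesting $\omega_{n-1}\subset\omega_{n-2}\subset\dots\subset\omega_0$, which is the standing hypothesis of Theorem~\ref{T:interpolation in hierarchical spaces}. Thus the theorem is applicable with level index $\ell-1$, giving
\begin{equation*}
\|f-\qih f\|_{L^q(\omega_{\ell-1})}\le C_A(1+2C_S)\sum_{i=1}^d h_{\ell-1,i}^{s_i}\|D_{x_i}^{s_i}f\|_{L^q(\Omega_{\ell-1})}.
\end{equation*}

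Next, since $\Omega_\ell\subset\omega_{\ell-1}$, the left-hand side controls $\|f-\qih f\|_{L^q(\Omega_\ell)}$. Finally, I would convert $h_{\ell-1,i}$ to $h_{\ell,i}$ using dyadic refinement, namely $h_{\ell,i}=\tfrac12 h_{\ell-1,i}$, so $h_{\ell-1,i}^{s_i}=2^{s_i}h_{\ell,i}^{s_i}\le 2^{p_i+1}h_{\ell,i}^{s_i}$ because $s_i\le p_i+1$. Absorbing the factor $2^{p_i+1}$ (which depends only on ${\bf p}$) into the constant yields the claimed inequality with a constant $C=C(d,{\bf s},{\bf p},\theta)$.

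There is no real obstacle here: the only thing to check carefully is that strict admissibility at index $\ell$ gives exactly the containment needed to both apply the multiscale estimate at index $\ell-1$ and restrict the resulting bound to $\Omega_\ell$. The rest is bookkeeping with the dyadic scaling. If one wanted a constant independent of $\theta$, that would be harder, but the statement already allows $\theta$-dependence through $C_A$ and $C_S$, so this is not an issue.
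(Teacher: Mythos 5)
Your argument is correct and is exactly the route the paper intends: the corollary is stated without a written proof, as a direct application of Theorem~\ref{T:interpolation in hierarchical spaces} at level $\ell-1$, using $\Omega_\ell\subset\omega_{\ell-1}$ from strict admissibility and the exact halving $h_{\ell,i}=\tfrac12 h_{\ell-1,i}$ of dyadic refinement to trade $h_{\ell-1,i}^{s_i}$ for $2^{s_i}h_{\ell,i}^{s_i}$. One small caveat: the corollary as printed claims $C$ depends only on $d$, ${\bf s}$ and ${\bf p}$, whereas your (and the paper's) constant inherits the $\theta$-dependence of $C_A$ and $C_S$ --- this is an imprecision in the statement itself rather than in your argument.
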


\begin{figure}[H!tbp]
\begin{center}
\includegraphics[width=.33\textwidth]{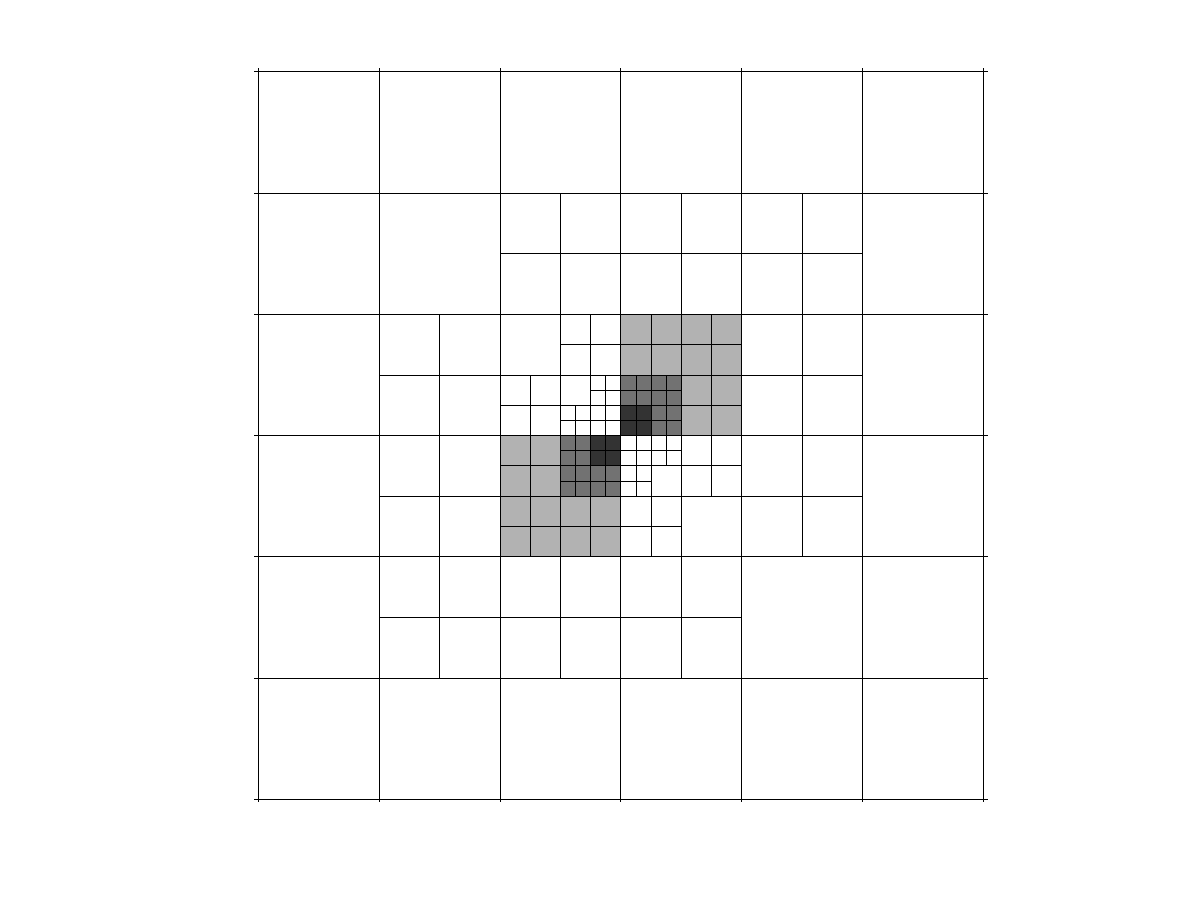} \hfill
\includegraphics[width=.33\textwidth]{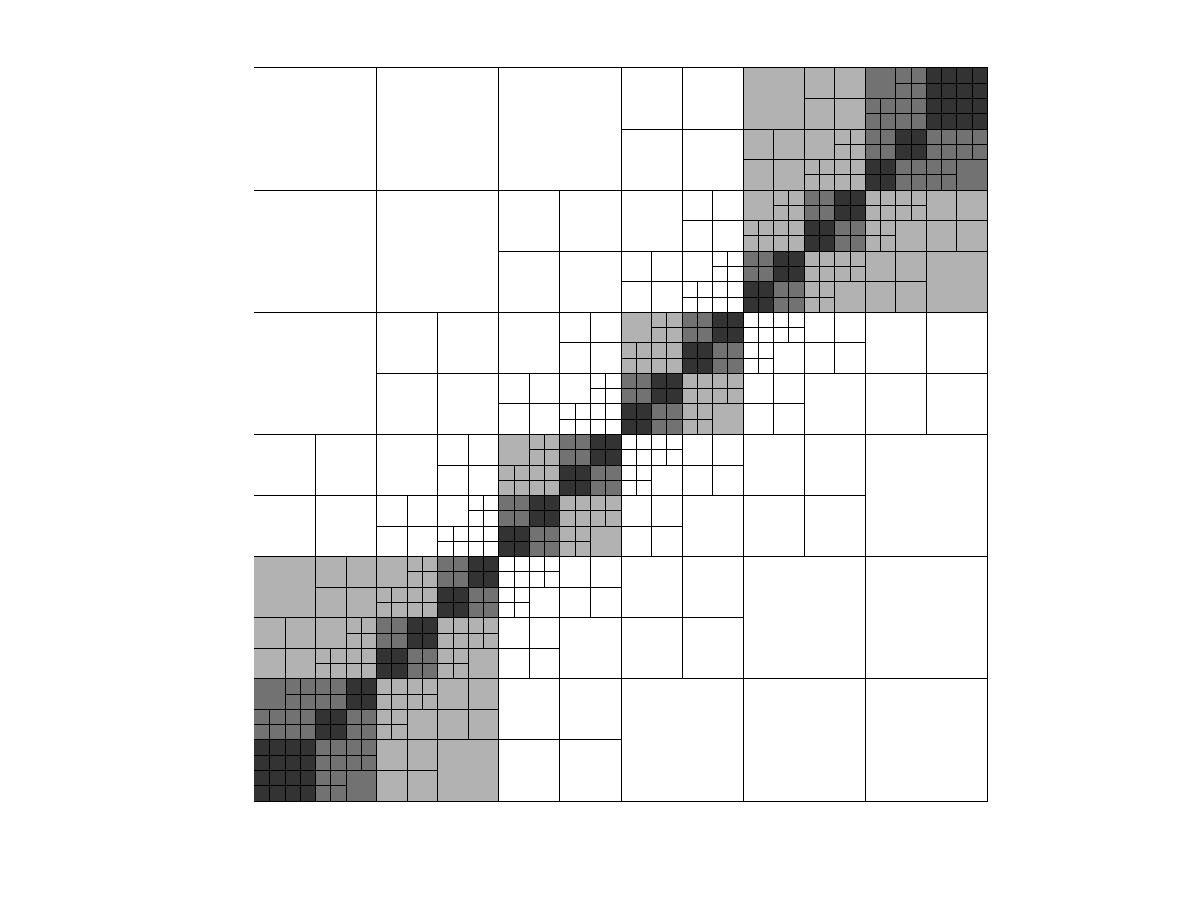}\hfill 
\includegraphics[width=.33\textwidth]{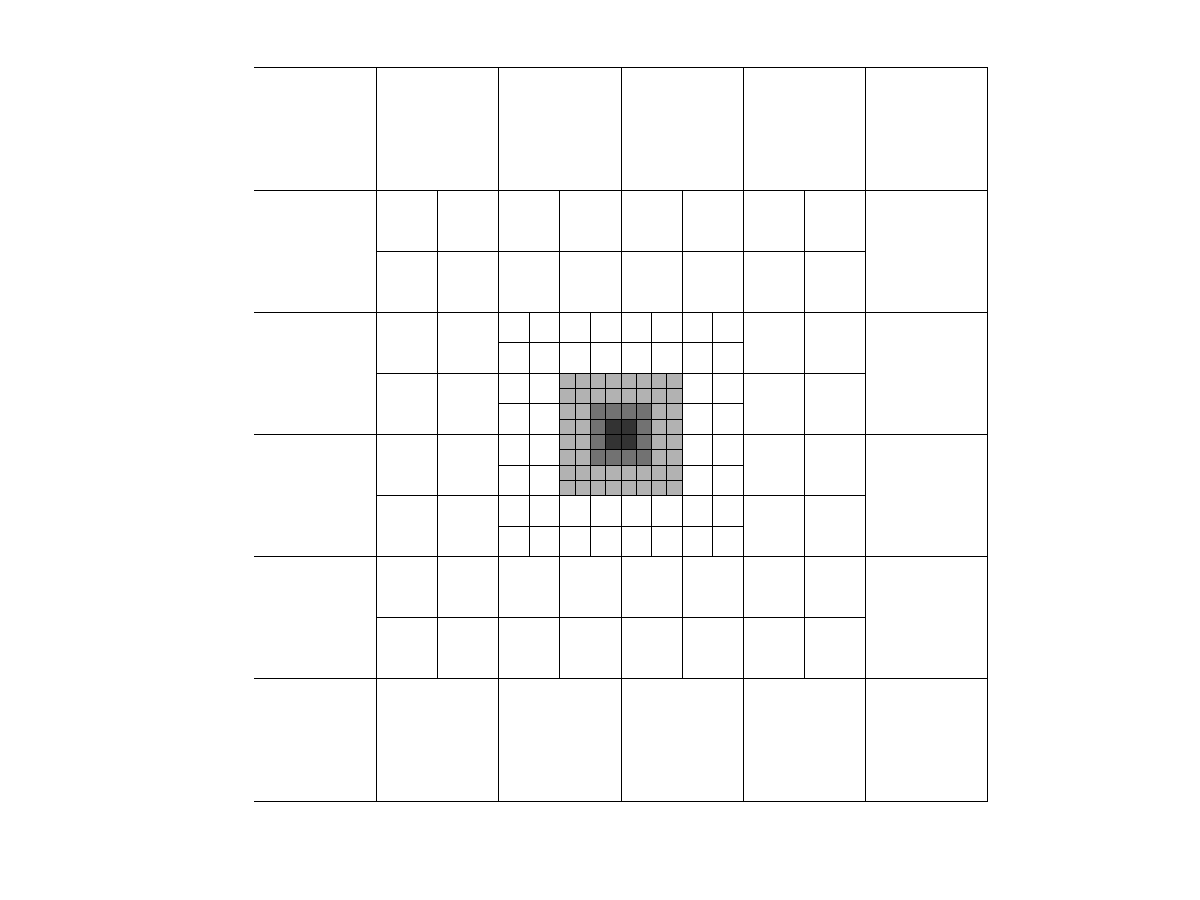}
\end{center}
\caption{\label{F:non admissible meshes} \small Some examples of four-level meshes for splines of maximum smoothness; we consider quadratics ($p=2$) on the left and in the middle, and cubics ($p=3$) on the right. In all cases, the meshes are not strictly admissible but they satisfy~\eqref{E:nesting of small omegas}. The domains $\omega_1$, $\omega_2$ and $\omega_3$ are highlighted in grey from the lightest to the darkest.}
\end{figure}

\section{A new easier hierarchical spline space}\label{S:new hierarchical basis}

Assume that we have already computed the set $\HH_\ell$ for given $\ell$ (cf.~\eqref{E:definition of Hltilde}). Now, in order to compute $\HH_{\ell+1}$ we need to select the new B-splines to be added, i.e., $\{\beta\in\BB_{\ell+1}\,|\, \supp\beta\subset\Omega_{\ell+1}\}$. Once we know the B-spline basis functions to be deactivated $\RRR_\ell=\{\beta\in\BB_{\ell}\,|\,\supp\beta\subset\Omega_{\ell+1}\}$, notice that it is not enough replacing the functions in $\RRR_\ell$ by their children, because in general,
\begin{equation*}
\bigcup_{\beta\in\RRR_\ell}\CC(\beta)\varsubsetneq \{\beta\in\BB_{\ell+1}\,|\, \supp\beta\subset\Omega_{\ell+1}\}.
\end{equation*}

In Figure~\ref{F:example} we show some examples of this situation. This observation suggests a simplified way of selecting B-splines at different levels which consists in adding solely the children of the deactivated functions. Doing so, we obtain a new hierarchical space whose basis that we call $\tilde\HH=\tilde\HH({\bf\Omega}_n)$ is defined as follows:

\begin{figure}[H!tbp]
\begin{center}
\includegraphics[width=.45\textwidth]{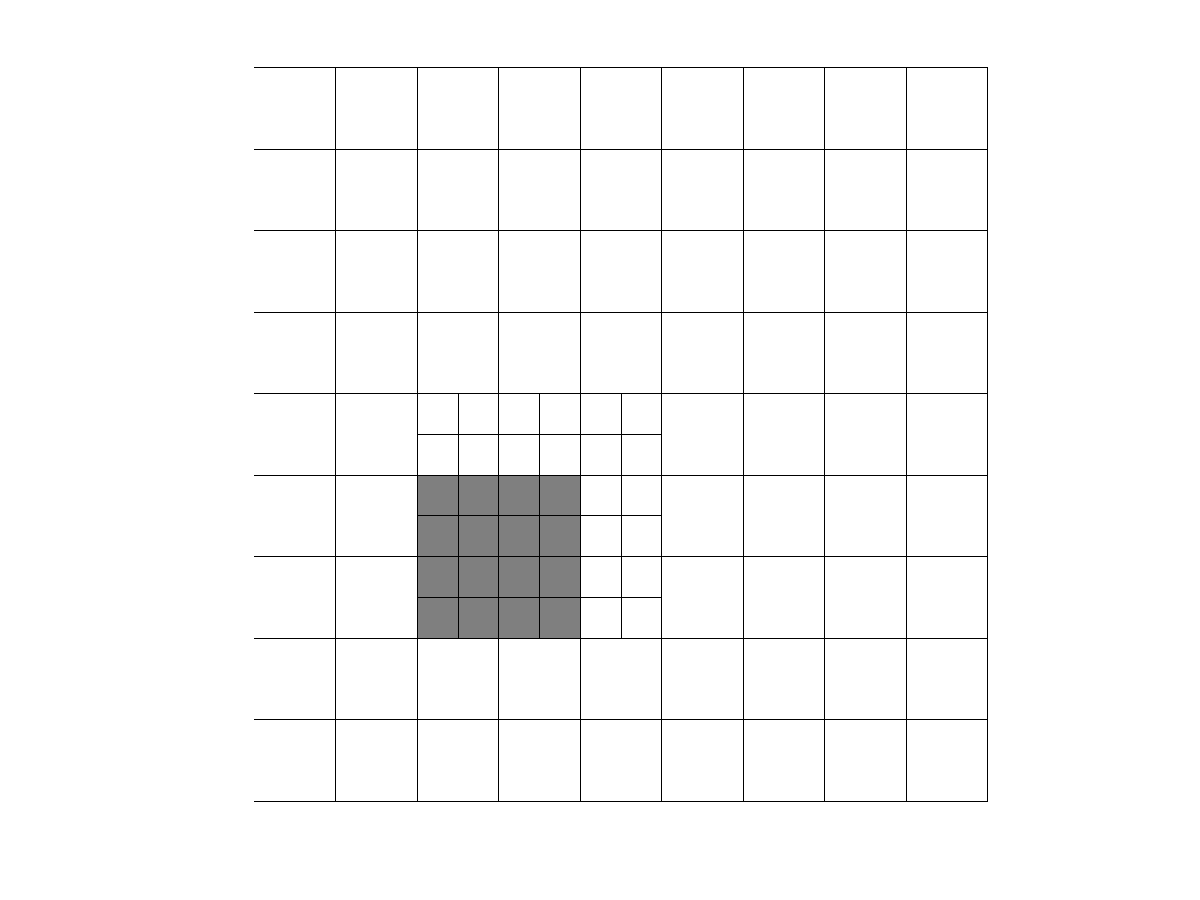} \hfill
\includegraphics[width=.45\textwidth]{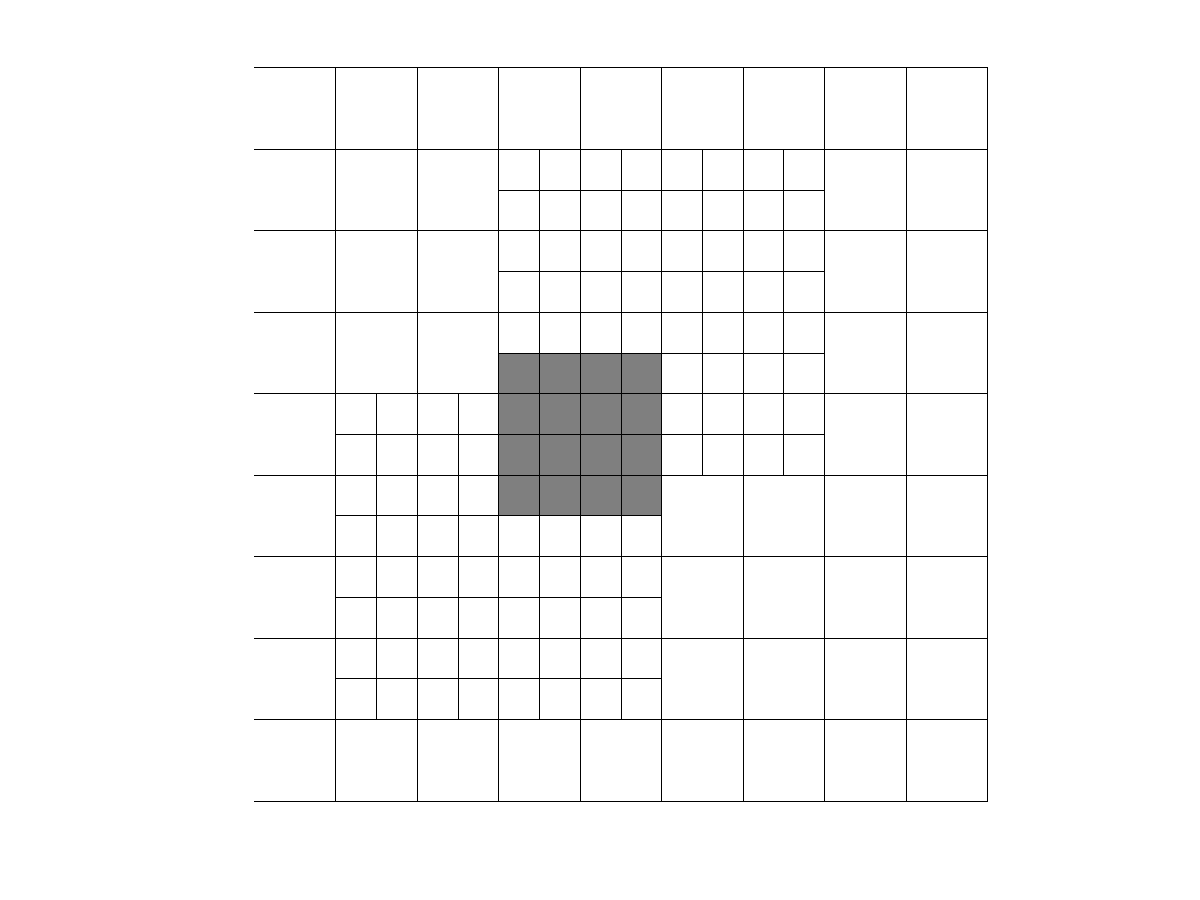} 
\end{center}

\caption{\label{F:example} \small Some examples of two-level meshes for cubic splines ($p=3$) of maximum smoothness. In both cases, the highlighted B-splines of level 1 have support included in $\Omega_1$, but they are not children of any deactivated B-spline of level 0.}
\end{figure}

\begin{equation}\label{E:definition of Hl}
\begin{cases}\tilde\HH_0:=\BB_0,\\
 {\tilde\HH}_{\ell+1}:=\{\beta\in\tilde\HH_{\ell}\,|\,\supp\beta\not\subset\Omega_{\ell+1}\}\cup\D\bigcup_{\substack{\beta\in\tilde\HH_{\ell} \\ \supp\beta\subset\Omega_{\ell+1}}}\CC(\beta),\qquad\ell=0,\dots,n-2.\\
 \tilde\HH:=\tilde\HH_{n-1}.\end{cases}
\end{equation}

In this case, if $\tilde\RRR_\ell:=\tilde\HH_\ell\setminus\tilde\HH_{\ell+1}$, we have that
\begin{equation}\label{E:Rl}
\tilde\RRR_\ell=\{\beta\in\tilde\HH_{\ell}\,|\,\supp\beta\subset\Omega_{\ell+1}\}\subset\{\beta\in\BB_{\ell}\,|\,\supp\beta\subset\Omega_{\ell+1}\}, 
\end{equation}
but now, we can get $\tilde\HH_{\ell+1}$ from $\tilde\HH_{\ell}$ by replacing the B-splines in $\tilde\RRR_\ell$ by their children.

Thus, it seems that building the basis $\tilde\HH$ is easier than the basis $\HH$. In particular, there is no need of traversing the mesh in order to identify the B-splines to add in each recursive step of~\eqref{E:definition of Hl}. However, as an immediate consequence of the following lemma we have that $$\tilde\HH\subset{\HH},$$
and therefore, in general, $\Span\tilde\HH$ can be smaller than $\Span\HH$. 

\begin{lemma}\label{L:Hl subset of tildeHl}
 \begin{equation}\label{E:Hl subset of tildeHl}
  \tilde\HH_\ell\subset\HH_\ell,\qquad\ell=0,1,\dots,n-1.
 \end{equation}
\end{lemma}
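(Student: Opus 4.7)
The plan is to prove the inclusion by induction on $\ell$. The base case $\ell=0$ is immediate, since $\tilde\HH_0 = \BB_0 = \HH_0$ by definition.

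For the inductive step, I assume $\tilde\HH_\ell \subset \HH_\ell$ and take an arbitrary $\beta \in \tilde\HH_{\ell+1}$. By the definition~\eqref{E:definition of Hl}, $\beta$ falls into one of two cases. In the first case, $\beta \in \tilde\HH_\ell$ with $\supp\beta \not\subset \Omega_{\ell+1}$; then the inductive hypothesis gives $\beta \in \HH_\ell$, and inspecting~\eqref{E:definition of Hltilde} we immediately conclude $\beta \in \HH_{\ell+1}$. In the second case, $\beta \in \CC(\beta')$ for some $\beta' \in \tilde\HH_\ell$ with $\supp\beta' \subset \Omega_{\ell+1}$; then $\beta \in \BB_{\ell+1}$ by definition of the child set, and the key observation recorded after the definition of $\CC$ (namely $\supp\beta \subset \supp\beta'$ whenever $\beta$ is a child of $\beta'$) yields $\supp\beta \subset \Omega_{\ell+1}$. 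Consulting the recursion~\eqref{E:definition of Hltilde} once more, this places $\beta$ in the set $\{\gamma \in \BB_{\ell+1} \mid \supp\gamma \subset \Omega_{\ell+1}\} \subset \HH_{\ell+1}$.

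There is no real obstacle here: the two definitions differ only in which B-splines of level $\ell+1$ get added when the support of a function of level $\ell$ is swallowed by $\Omega_{\ell+1}$, and the $\tilde\HH$ construction adds a strict subset (only children of deactivated functions) of what the $\HH$ construction adds (all level-$\ell+1$ B-splines supported in $\Omega_{\ell+1}$). The only nontrivial fact invoked is the support containment $\supp\beta \subset \supp\beta'$ for $\beta \in \CC(\beta')$, which is already noted in the paper.
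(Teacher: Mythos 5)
Your proof is correct and follows essentially the same route as the paper, which merely sketches the argument (``using mathematical induction and taking into account~\eqref{E:Rl} and~\eqref{E:set of children} the proof can be completed''); your two-case induction step, resting on the support containment $\supp\beta\subset\supp\beta'$ for $\beta\in\CC(\beta')$ from~\eqref{E:set of children}, is precisely the detail the authors leave to the reader.
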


\begin{proof}
Notice that~\eqref{E:Hl subset of tildeHl} holds for $\ell=0$ due to $\tilde\HH_0=\HH_0=\BB_0$. Now, using mathematical induction and taking into account~\eqref{E:Rl} and~\eqref{E:set of children} the proof can be completed.
\end{proof}

Thus, since that $\tilde\HH\subset\HH$, when considering the basis $\tilde\HH$ instead $\HH$ for discretizations in isogemetric methods, it will be important to understand which functions we are discarding from the basis and the properties of the space $\Span\tilde\HH$. Regarding the set of coefficients $\{a_\beta\}_{\beta\in\HH}$ for the partition of unity in $\HH$ (cf. Lemma~\ref{L:definition of the weights}) and using Theorem~\ref{T:caracterization of nonzero weights}, we can establish the following characterization for functions in $\tilde\HH$.

\begin{theorem}\label{T:caracterization of H}
For $\ell=0,1,\dots,n-1$,
\begin{equation}\label{E:caracterization for Hl}
\tilde\HH_\ell=\{\beta\in\HH_\ell\mid a_\beta > 0\}. 
\end{equation}
In particular,
 $$\tilde\HH = \{\beta\in {\HH}\mid a_\beta > 0\}.$$
\end{theorem}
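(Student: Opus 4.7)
The plan is to prove the statement by induction on $\ell$, noting that the second assertion ($\tilde\HH = \{\beta \in \HH \mid a_\beta > 0\}$) is simply the case $\ell = n-1$ of~\eqref{E:caracterization for Hl}. For the base case $\ell = 0$, both sides equal $\BB_0$: $\tilde\HH_0 = \HH_0 = \BB_0$ by definition, and $a_{\beta_0} = 1 > 0$ for every $\beta_0 \in \BB_0$ by the initialization in Lemma~\ref{L:definition of the weights}.

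For the inductive step, I would assume $\tilde\HH_\ell = \{\beta \in \HH_\ell \mid a_\beta > 0\}$ and compare the two pieces of the recursive definitions~\eqref{E:definition of Hltilde} and~\eqref{E:definition of Hl}. Both $\HH_{\ell+1}$ and $\tilde\HH_{\ell+1}$ split as a disjoint union of a \emph{kept} part (functions inherited from level $\ell$ whose support is not contained in $\Omega_{\ell+1}$) and a \emph{new} part (B-splines of level $\ell+1$ added at this step). The kept parts agree immediately by the inductive hypothesis, since both the condition $\supp\beta \not\subset \Omega_{\ell+1}$ and the weight $a_\beta$ are intrinsic to $\beta$ and do not depend on the level at which $\beta$ is considered.

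For the new parts, the key step is to invoke Theorem~\ref{T:caracterization of nonzero weights} in its contrapositive form: a B-spline $\beta_{\ell+1} \in \BB_{\ell+1}$ with $\supp\beta_{\ell+1} \subset \Omega_{\ell+1}$ satisfies $a_{\beta_{\ell+1}} > 0$ if and only if there is some $\beta_\ell \in \PPP(\beta_{\ell+1})$ with $a_{\beta_\ell} > 0$, $\supp\beta_\ell \subset \Omega_\ell$, and $\supp\beta_\ell \subset \Omega_{\ell+1}$. The conditions $\beta_\ell \in \BB_\ell$ and $\supp\beta_\ell \subset \Omega_\ell$ force $\beta_\ell \in \HH_\ell$ (such a function is exactly one of those kept or added when passing from $\HH_{\ell-1}$ to $\HH_\ell$), so by the inductive hypothesis $a_{\beta_\ell} > 0$ is equivalent to $\beta_\ell \in \tilde\HH_\ell$. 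Therefore $a_{\beta_{\ell+1}} > 0$ holds exactly when $\beta_{\ell+1}$ is a child of some $\beta_\ell \in \tilde\HH_\ell$ with $\supp\beta_\ell \subset \Omega_{\ell+1}$, which is precisely the description of the new part of $\tilde\HH_{\ell+1}$ in~\eqref{E:definition of Hl}. Combining this with Lemma~\ref{L:Hl subset of tildeHl}, which guarantees $\tilde\HH_{\ell+1} \subset \HH_{\ell+1}$, the equality at level $\ell+1$ follows and the induction closes.

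The main obstacle is the bridge supplied by Theorem~\ref{T:caracterization of nonzero weights}: one must match its parenthetical hypotheses ($a_{\beta_\ell} > 0$, $\supp\beta_\ell \subset \Omega_\ell$) against membership in $\HH_\ell$ versus $\tilde\HH_\ell$, and verify that the set indexing the sum in~\eqref{E:definition of the weights} precisely yields the children of functions in $\tilde\HH_\ell$ deactivated at this step. Once this identification is made cleanly, the rest of the argument is routine bookkeeping from the two recursive constructions.
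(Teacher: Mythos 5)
Your proposal is correct and follows essentially the same route as the paper: induction on $\ell$, with the base case $\tilde\HH_0=\HH_0=\BB_0$ and $a_{\beta_0}=1$, the inductive hypothesis handling the functions kept from level $\ell$, and Theorem~\ref{T:caracterization of nonzero weights} (in its negated/contrapositive form) characterizing which newly added functions of $\BB_{\ell+1}$ have positive weight, i.e.\ are children of functions in $\tilde\RRR_\ell$. The only difference is presentational: the paper argues the two inclusions by element-chasing, while you phrase the same content as an equality of the ``new parts'' of the two recursions.
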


\begin{proof}
Since that $a_\beta =1$, for all $\beta\in\BB_0$, we have that~\eqref{E:caracterization for Hl} holds for $\ell=0$. Assume now that~\eqref{E:caracterization for Hl} holds for some $\ell$ and prove that it holds for $\ell+1$. 

Let $\beta\in\tilde\HH_{\ell+1}$. If $\beta\in\tilde\HH_{\ell+1}\cap\tilde\HH_\ell$, using the induction hypothesis we have that $a_\beta>0$. On the other hand, if $\beta\in\tilde\HH_{\ell+1}\setminus\tilde\HH_\ell$, there exists $\beta_\ell\in\tilde\HH_\ell\cap\PPP(\beta)$ such that $\supp\beta_\ell\subset\Omega_{\ell+1}$ and Theorem~\ref{T:caracterization of nonzero weights} yields $a_\beta>0$. Thus, by Lemma~\ref{L:Hl subset of tildeHl} we have that $\tilde\HH_{\ell+1}\subset\{\beta\in\HH_{\ell+1}\mid a_\beta > 0\}$. 

Now, let $\beta\in\HH_{\ell+1}$ satisfying $a_\beta > 0$. If $\beta\in\HH_{\ell+1}\cap\HH_\ell$, using the induction hypothesis we have that $\beta\in\tilde\HH_{\ell+1}$. On the other hand, if $\beta\in\HH_{\ell+1}\setminus\HH_\ell$, we have that $\beta\in\BB_{\ell+1}$ and $\supp\beta\subset\Omega_{\ell+1}$. Thus, using Theorem~\ref{T:caracterization of nonzero weights} we have that there exists $\beta_\ell\in\PPP(\beta)$ such that $a_{\beta_\ell}>0$ and $\supp\beta_\ell\subset\Omega_{\ell+1}$. The induction hypotesis now implies that $\beta_\ell\in\tilde\HH_\ell$ and therefore, $\beta\in\tilde\HH_{\ell+1}$. In consequence,  $\{\beta\in\HH_{\ell+1}\mid a_\beta > 0\}\subset\tilde\HH_{\ell+1}$, which concludes the proof.
\end{proof}

Notice that the functions in $\tilde\HH$ are linearly independent because $\tilde\HH\subset\HH$. On the other hand, from~\eqref{E:definition of Hl} it follows that
\begin{equation}\label{E:nesting}
\Span \tilde\HH_\ell\subset \Span \tilde\HH_{\ell+1},\qquad \ell = 0,1,\dots,n-2,
\end{equation}
and therefore, taking into account that $\tilde\HH_0 = \BB_0$ and $\tilde\HH_{n-1}=\tilde\HH$,
$$\VV_0=\Span \BB_0\subset 
\Span\tilde\HH.$$

\begin{remark}
Since $\Span\BB_0\subset \Span \tilde\HH$, we have that tensor-product polynomials in $\mathbb{P}_{\bf p}$ belong to $\Span\tilde\HH$.
\end{remark}

In the Section~\ref{S:quasi interpolation in H tilde} we study the local approximation properties of the space $\Span\tilde\HH$, through multiscale quasi-interpolant operators.

Finally, taking into account Theorems~\ref{T:caracterization of H} and~\ref{T:caracterization of nonzero weights} we can prove the analogous result of Lemma~\ref{L:decomposition1} when considering the basis $\tilde\HH$.

\begin{lemma}\label{L:decomposition}
Let $\tilde\HH$ be the hierarchical B-spline basis defined by~\eqref{E:definition of Hl} associated to the hierarchy of 
subdomains of depth $n$,
${\bf\Omega}_n := \{\Omega_0,\Omega_1,\dots,\Omega_n\}$. Then, 
 \begin{equation}\label{E:decomposition}
\tilde\RRR_\ell\subset \Span\left(\tilde\HH\cap\bigcup_{k=\ell+1}^{n-1}\BB_k\right),  
 \end{equation}
for $\ell=0,1,\dots,n-2$. 
\end{lemma}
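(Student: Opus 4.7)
The plan is to mirror the proof of Lemma~\ref{L:decomposition1} by downward induction on $\ell$. The only extra ingredient needed is the following direct consequence of the construction~\eqref{E:definition of Hl}: whenever $\beta\in\tilde\HH_k$ satisfies $\supp\beta\subset\Omega_{k+1}$, every element of $\CC(\beta)$ is placed into $\tilde\HH_{k+1}$. Combined with the monotonicity $\Omega_{k+1}\subset\Omega_k$ of the nested subdomains, this lets the argument go through with $\HH$ replaced by $\tilde\HH$.

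For the base case $\ell=n-2$, I take $\beta\in\tilde\RRR_{n-2}$; by~\eqref{E:Rl}, $\beta\in\BB_{n-2}$ and $\supp\beta\subset\Omega_{n-1}$. The two-scale relation~\eqref{E:two scale relation 2} expresses $\beta$ as a linear combination of its children $\CC(\beta)\subset\BB_{n-1}$, and the observation above places each such child in $\tilde\HH_{n-1}=\tilde\HH$. This yields~\eqref{E:decomposition} at level $n-2$.

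For the inductive step, assume~\eqref{E:decomposition} at level $\ell$ and take $\beta_{\ell-1}\in\tilde\RRR_{\ell-1}$, so $\beta_{\ell-1}\in\BB_{\ell-1}$ and $\supp\beta_{\ell-1}\subset\Omega_\ell$. The two-scale relation gives
$$\beta_{\ell-1}=\sum_{\beta_\ell\in\CC(\beta_{\ell-1})} c_{\beta_\ell}(\beta_{\ell-1})\,\beta_\ell,$$
and the observation above guarantees $\CC(\beta_{\ell-1})\subset\tilde\HH_\ell$. I split this sum according to whether $\supp\beta_\ell\subset\Omega_{\ell+1}$. Children satisfying the inclusion belong to $\tilde\RRR_\ell$, and by the inductive hypothesis each lies in $\Span\bigl(\tilde\HH\cap\bigcup_{k=\ell+1}^{n-1}\BB_k\bigr)$. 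Children not satisfying the inclusion sit in $\tilde\HH_\ell$ and, because $\Omega_{k+1}\subset\Omega_{\ell+1}$ for every $k\geq\ell$, their supports never satisfy a later deactivation condition; a one-line induction on~\eqref{E:definition of Hl} then shows they remain in $\tilde\HH_k$ for all $k\geq\ell+1$, hence in $\tilde\HH\cap\BB_\ell$. Adding the two contributions gives $\beta_{\ell-1}\in\Span\bigl(\tilde\HH\cap\bigcup_{k=\ell}^{n-1}\BB_k\bigr)$, which closes the induction.

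I do not foresee a real obstacle. The only subtle point is the claim that a child surviving the step $\ell\to\ell+1$ is also untouched by every later step, but this is the short monotonicity argument sketched above. An alternative route would be to combine Theorem~\ref{T:caracterization of H} with Lemma~\ref{L:decomposition1} and the linear independence of $\HH$ (so that a decomposition of $\beta\in\tilde\RRR_\ell\subset\RRR_\ell$ in $\HH$-functions automatically has zero coefficient on every $\gamma\in\HH$ with $a_\gamma=0$), but the direct induction above seems the most economical.
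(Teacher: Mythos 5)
Your proof is correct and follows essentially the same route as the paper's: downward induction on $\ell$, the two-scale relation to expand $\beta_{\ell-1}$ over its children, and a split of $\CC(\beta_{\ell-1})$ into $\tilde\RRR_\ell$ (handled by the inductive hypothesis) and its complement (which stays active at all later levels). You in fact make explicit two small points the paper leaves implicit, namely that $\CC(\beta_{\ell-1})\subset\tilde\HH_\ell$ by construction and that a child with $\supp\beta_\ell\not\subset\Omega_{\ell+1}$ is never deactivated afterwards because the $\Omega_k$ are nested.
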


\begin{proof}
 Notice that~\eqref{E:decomposition} holds for $\ell=n-2$ due to~\eqref{E:two scale relation}. Let us assume that~\eqref{E:decomposition} holds for some $\ell$, and prove that it holds for $\ell-1$. Let $\beta_{\ell-1}\in\tilde\RRR_{\ell-1}$. Since $\beta_{\ell-1}\in\BB_{\ell-1}$ and $\supp\beta_{\ell-1}\subset\Omega_\ell$, we have that
 $$\beta_{\ell-1} = \sum_{\beta_\ell\in\CC(\beta_{\ell-1})} c_{\beta_\ell}(\beta_{\ell-1})\beta_\ell=\sum_{\beta_\ell\in\tilde\RRR_\ell} c_{\beta_\ell}(\beta_{\ell-1})\beta_\ell+\sum_{\beta_\ell\in\CC(\beta_{\ell-1})\setminus\tilde\RRR_\ell} c_{\beta_\ell}(\beta_{\ell-1})\beta_\ell.$$
 Thus, $\beta_{\ell-1}\in\Span\left(\tilde\HH\cap\bigcup_{k=\ell}^{n-1}\BB_k\right)$, which concludes the proof.
\end{proof}

\subsection{Quasi-interpolation and local approximation properties in $\Span\tilde\HH$}\label{S:quasi interpolation in H tilde}

In this section we assume that each level is obtained by dyadic refinement of the elements of the previous one (see Remark~\ref{R:Hypotesis on knot vectors}). The following related auxiliary technical result will be useful. The proof is presented in the Appendix 1.

\begin{lemma}\label{L:auxiliar characterization}
 Let $\BB_{\ell,\omega_\ell}$ be the set of B-splines defined in~\eqref{E:B ell omega ell}, for $\ell=0,1,\dots,n-1$. Then,
\begin{equation*}
 \BB_{\ell+1,\omega_{\ell+1}}\subset\bigcup_{\substack{\beta\in\BB_\ell \\ \supp\beta\subset\Omega_{\ell+1}}}\CC(\beta),\qquad \ell = 0,1,\dots,n-2.
\end{equation*}
\end{lemma}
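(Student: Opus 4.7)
The plan is to produce, for any $\beta_{\ell+1} \in \BB_{\ell+1,\omega_{\ell+1}}$, a level-$\ell$ B-spline $\beta_\ell$ with $\supp\beta_\ell \subset \Omega_{\ell+1}$ and $\beta_{\ell+1} \in \CC(\beta_\ell)$. First I pick $Q' \in \QQ_{\ell+1}$ with $Q' \subset \supp\beta_{\ell+1} \cap \omega_{\ell+1}$, which exists by~\eqref{E:B ell omega ell}; by the definition of $\omega_{\ell+1}$ we then have $\tilde Q' \subset \Omega_{\ell+1}$. Since $\Omega_{\ell+1}$ is a union of closed level-$\ell$ cells with pairwise disjoint interiors, every level-$\ell$ cell whose interior meets $\tilde Q'$ is contained in $\Omega_{\ell+1}$; letting $R$ be the union of all such cells, we obtain $R \subset \Omega_{\ell+1}$, and by tensor-product structure $R = R_1 \times \cdots \times R_d$, where $R_i$ is the union of 1D level-$\ell$ intervals whose interior meets $\tilde{Q'_i}$, the $i$-th factor of $\tilde Q'$.

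Since the multivariate parent-child relation is componentwise (per the definition at the end of Section~\ref{S:tensor product}), it suffices to produce, in each direction $i$, a univariate parent $\beta_\ell^{(i)} \in \PPP(\beta_{\ell+1}^{(i)})$ with $\supp \beta_\ell^{(i)} \subset R_i$; the tensor product $\beta_\ell = \beta_\ell^{(1)} \otimes \cdots \otimes \beta_\ell^{(d)}$ then satisfies $\supp\beta_\ell \subset R \subset \Omega_{\ell+1}$ and $\beta_{\ell+1} \in \CC(\beta_\ell)$. For the univariate construction I would exploit dyadic refinement: each level-$\ell$ cell is bisected, so $\tilde{Q'_i}$ (a union of at most $2p_i+1$ consecutive level-$(\ell+1)$ cells around $Q'_i$) is covered by a contiguous run of at most $p_i+1$ level-$\ell$ cells that make up $R_i$. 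The candidate parent is the level-$\ell$ B-spline whose local knot vector lists the level-$\ell$ knots bracketing $\tilde{Q'_i}$, with multiplicities inherited from $\Xi_{p_i,n_i^\ell}$; its support equals $R_i$. Parenthood is then verified by the two-condition characterization stated just after the definition of $\CC$: condition~(i) follows from $\supp\beta_{\ell+1}^{(i)} \subset \tilde{Q'_i} \subset R_i = \supp\beta_\ell^{(i)}$, and condition~(ii) from the fact that dyadic refinement preserves the multiplicities of the level-$\ell$ knots.

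The main obstacle is this univariate construction in the presence of internal knots of multiplicity close to $p_i+1$: such knots shrink both $\tilde{Q'_i}$ and the maximal support of level-$\ell$ B-splines, and several level-$\ell$ B-splines may share the same support $R_i$, so one must carefully select the candidate whose local multiplicities at the endpoints of $R_i$ dominate the multiplicities appearing in $\Xi_{\beta_{\ell+1}^{(i)}}$ at those knots in order to secure condition~(ii). The remaining geometric arguments—tensor-product reduction and the fact that $R \subset \Omega_{\ell+1}$—are straightforward.
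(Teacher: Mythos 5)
Your overall strategy is the same as the paper's: choose a cell $Q\subset\supp\beta_{\ell+1}\cap\omega_{\ell+1}$, use $\tilde Q\subset\Omega_{\ell+1}$, enlarge $\tilde Q$ to a union of level-$\ell$ cells that still lies inside $\Omega_{\ell+1}$ (your $R$; the paper's extra knot $\hat\xi$ plays exactly this role), and reduce to a univariate search for a parent in each coordinate direction. Your treatment of the multivariate reduction and of the inclusion $R\subset\Omega_{\ell+1}$ is correct, and in fact more explicit than the paper's one-line remark that the argument is applied direction by direction.

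The problem is that the univariate step you defer is the actual content of the lemma, and your named candidate does not exist in general: a level-$\ell$ local knot vector has exactly $p_i+2$ entries, whereas the level-$\ell$ knots lying in $\overline{R_i}$, counted with their multiplicities in $\Xi_{p_i,n_i^\ell}$, can number more than $p_i+2$ (take $p_i=2$ and an interior breakpoint of $R_i$ of multiplicity $2$: then $\overline{R_i}$ carries $p_i+3$ knots and \emph{no} level-$\ell$ B-spline has support equal to $R_i$, so the issue is not merely that several candidates share that support). The paper closes this gap with a counting argument instead of a canonical candidate: among the $2p+2$ consecutive level-$(\ell+1)$ knots of $\Xi_{\tilde Q}$ at least $p+1$ belong to $\Xi_\ell$ (dyadic refinement only inserts simple, isolated new knots); when exactly $p+1$, one additional level-$\ell$ knot can be appended while staying inside $\Omega_{\ell+1}$ (this is where your enlargement is used); from the resulting pool of at least $p+2$ consecutive level-$\ell$ knots one extracts a local knot vector $\Xi_{\beta_\ell}$ containing $\Xi_{\beta_{\ell+1}}\cap\Xi_\ell$ as a subsequence, splitting into the two cases of whether the endpoints of $\Xi_{\beta_{\ell+1}}$ match level-$\ell$ knots; it is this subsequence containment, not preservation of multiplicities under refinement, that secures condition (ii). So your skeleton is the right one, but the proof is incomplete precisely at the point you flag as ``the main obstacle,'' and the specific construction you propose for that step would fail as stated.
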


This lemma allows us to prove the following proposition, which, together with the 
results presented in the previous section, show that the space $\Span\tilde \HH$ is \emph{rich enough}, 
and in particular contains all the local spaces $\Span \BB_{\ell,\omega_\ell}$.

\begin{proposition}\label{P:aux quasi-interpolation}
 Assume that
 \begin{equation}\label{E:nesting omega l}
\omega_{n-1}\subset\omega_{n-2}\subset\dots\subset\omega_2\subset\omega_1\subset\omega_0.  
 \end{equation}
  Then, 
  \begin{equation*}
 \BB_{\ell,\omega_{\ell}}\subset  \tilde\HH_\ell,\qquad \ell = 0,1,\dots,n-1.
\end{equation*}
\end{proposition}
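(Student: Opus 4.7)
My plan is to prove the proposition by induction on $\ell$, using the auxiliary Lemma~\ref{L:auxiliar characterization} as the key ingredient to pull each $\beta \in \BB_{\ell+1,\omega_{\ell+1}}$ back to a parent that is already known to lie in $\tilde\HH_\ell$.

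The base case $\ell=0$ is immediate, since $\tilde\HH_0 = \BB_0$ by definition and $\BB_{0,\omega_0}\subset \BB_0$. For the inductive step, I assume $\BB_{\ell,\omega_\ell}\subset \tilde\HH_\ell$ and take $\beta \in \BB_{\ell+1,\omega_{\ell+1}}$. By Lemma~\ref{L:auxiliar characterization}, there exists $\beta_\ell\in\BB_\ell$ with $\supp\beta_\ell\subset \Omega_{\ell+1}$ and $\beta\in\CC(\beta_\ell)$. In view of the recursive definition~\eqref{E:definition of Hl} of $\tilde\HH_{\ell+1}$, once I know $\beta_\ell\in\tilde\HH_\ell$ I can conclude $\beta\in\CC(\beta_\ell)\subset\tilde\HH_{\ell+1}$, so the task reduces to showing $\beta_\ell\in\BB_{\ell,\omega_\ell}$ and invoking the induction hypothesis.

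To locate a cell of level $\ell$ witnessing $\beta_\ell\in\BB_{\ell,\omega_\ell}$, I use that $\beta\in\BB_{\ell+1,\omega_{\ell+1}}$ provides some $Q'\in\QQ_{\ell+1}$ with $Q'\subset\supp\beta\cap\omega_{\ell+1}$. Since each level is obtained by dyadic refinement, the cell $Q'$ is contained in a unique cell $Q\in\QQ_\ell$. The hypothesis~\eqref{E:nesting omega l} yields $Q'\subset\omega_{\ell+1}\subset\omega_\ell$, and because $\omega_\ell$ is a union of $\ell$-cells with pairwise disjoint interiors, the $\ell$-cell $Q$ containing $Q'$ must itself lie in $\omega_\ell$. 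Moreover, $\supp\beta_\ell$ is a union of cells of level $\ell$ (its boundary sits on the level-$\ell$ knot lines), and since $\supp\beta\subset\supp\beta_\ell$ implies $Q'\subset Q\cap\supp\beta_\ell$ has nonempty interior, we obtain $Q\subset\supp\beta_\ell$. Therefore $Q\subset\supp\beta_\ell\cap\omega_\ell$, proving $\beta_\ell\in\BB_{\ell,\omega_\ell}$.

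The main obstacle I anticipate is the geometric step that $Q\subset\omega_\ell$ and $Q\subset\supp\beta_\ell$: both rely on the fact that $\omega_\ell$ and $\supp\beta_\ell$ are unions of level-$\ell$ cells together with the dyadic refinement assumption, which ensures every $(\ell+1)$-cell sits inside a unique $\ell$-cell and hence its containment in a union of $\ell$-cells promotes to the parent. Once this is handled cleanly, the rest of the argument is a direct chain of applications of Lemma~\ref{L:auxiliar characterization}, the induction hypothesis, and the definition of $\tilde\HH_{\ell+1}$.
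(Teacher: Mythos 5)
Your proof is correct, and its skeleton coincides with the paper's: induction on $\ell$, with Lemma~\ref{L:auxiliar characterization} producing a parent $\beta_\ell$ satisfying $\supp\beta_\ell\subset\Omega_{\ell+1}$, and the nesting~\eqref{E:nesting omega l} used to show that this parent belongs to $\BB_{\ell,\omega_\ell}$. The one genuine difference is how you conclude membership in $\tilde\HH_{\ell+1}$: the paper routes the argument through the weights, proving by induction that $a_\beta>0$ for every $\beta\in\BB_{\ell,\omega_\ell}$ (via Theorem~\ref{T:caracterization of nonzero weights}) and then invoking the characterization $\tilde\HH_\ell=\{\beta\in\HH_\ell\mid a_\beta>0\}$ of Theorem~\ref{T:caracterization of H}, whereas you work directly from the recursive definition~\eqref{E:definition of Hl}, observing that $\beta\in\CC(\beta_\ell)$ with $\beta_\ell\in\tilde\HH_\ell$ and $\supp\beta_\ell\subset\Omega_{\ell+1}$ already places $\beta$ in $\tilde\HH_{\ell+1}$. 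Your route is more elementary and self-contained; the paper's buys nothing extra here beyond reusing machinery it has already built. A further small point in your favour: the paper exhibits only a cell $Q_{\ell+1}\in\QQ_{\ell+1}$ contained in $\supp\beta_\ell\cap\omega_\ell$ and declares $\beta_\ell\in\BB_{\ell,\omega_\ell}$, although the definition~\eqref{E:B ell omega ell} requires a witnessing cell of level $\ell$; your explicit promotion of $Q'$ to its unique parent cell $Q\in\QQ_\ell$, using that both $\omega_\ell$ and $\supp\beta_\ell$ are unions of level-$\ell$ cells, supplies exactly the step the paper leaves implicit.
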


\begin{proof}
Taking into account the definition of $\HH_\ell$ given in~\eqref{E:definition of Hltilde} and the characterization of $\tilde\HH_\ell$ in~\eqref{E:caracterization for Hl}, we have that $\{\beta\in\BB_\ell\,|\,\supp\beta\subset\Omega_\ell\,\wedge\, a_\beta>0\}\subset  \tilde\HH_\ell$, for  $\ell = 0,1,\dots,n-1$. On the other hand, for $\beta\in \BB_{\ell,\omega_{\ell}}$, we have that $\supp\beta\subset\Omega_\ell$ and thus, $a_{\beta}$ is well-defined (cf.~\eqref{E:definition of the weights}). Thus, it will be enough to prove that
 \begin{equation}\label{E:aux key}
 \BB_{\ell,\omega_{\ell}}\subset\{\beta\in\BB_\ell\,|\,a_\beta>0\},\qquad \ell = 0,1,\dots,n-1.
\end{equation}
 Notice that~\eqref{E:aux key} holds for $\ell=0$ due to $a_{\beta_0}=1>0$, for all $\beta_0\in\BB_0=\BB_{0,\omega_0}$. Now, assume that~\eqref{E:aux key} holds for some $\ell$. Let $\beta_{\ell+1}\in\BB_{\ell+1,\omega_{\ell+1}}$. 
 In view of Lemma~\ref{L:auxiliar characterization}, there exists $\beta_\ell\in\PPP(\beta_{\ell+1})$ such that $\supp\beta_\ell\subset\Omega_{\ell+1}$. Moreover, taking into account~\eqref{E:nesting omega l}, the definition of $\BB_{\ell+1,\omega_{\ell+1}}$ implies that there exists $Q_{\ell+1}\in\QQ_{\ell+1}$ such that 
 $$Q_{\ell+1}\subset\supp\beta_{\ell+1}\cap\omega_{\ell+1}\subset \supp\beta_\ell\cap\omega_\ell,$$
 which in turn yields $\beta_\ell\in\BB_{\ell,\omega_\ell}$. Finally, taking into account the induction hypotesis, we have that $a_{\beta_\ell}>0$ and now using Theorem~\ref{T:caracterization of nonzero weights} we conclude that $a_{\beta_{\ell+1}}>0$. 
\end{proof}

In view of Remark~\ref{R: Image of Pi}, the immediate consequence of Proposition~\ref{P:aux quasi-interpolation} is that 
the multiscale quasi-interpolant operator defined in Section~\ref{S:interpolation} does construct interpolating 
functions belonging to $\tilde \HH$ and not only to $\HH$. Thus, the approximation estimates 
from Theorem~\ref{T:interpolation in hierarchical spaces} apply verbatim to $\tilde \HH$. We express this fact in the following simple corollary. 

\begin{corollary}Assume that
 $$\omega_{n-1}\subset\omega_{n-2}\subset\dots\subset\omega_2\subset\omega_1\subset\omega_0.$$
 Let $1\le q\le \infty$ and $\qih:L^q(\Omega)\to\Span\HH$ be the multiscale quasi-interpolant operator defined in~\eqref{E:definition qihh}.Then,
 $$\qih:L^q(\Omega)\to \Span \tilde\HH.$$
\end{corollary}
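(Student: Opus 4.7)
The plan is to combine two ingredients already at hand: the image of $\qih$ is built out of the local tensor-product spaces $\VV_{\omega_\ell}$ (Remark~\ref{R: Image of Pi}), and every generator of those spaces sits inside $\tilde\HH_\ell$ (Proposition~\ref{P:aux quasi-interpolation}). Once we show $\Span\tilde\HH_\ell\subset\Span\tilde\HH$ for every $\ell$, the conclusion follows by summing.

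Concretely, I would proceed as follows. First, invoke Remark~\ref{R: Image of Pi} to see that for any $f\in L^q(\Omega)$ there exist $s_\ell\in\VV_{\omega_\ell}=\Span\BB_{\ell,\omega_\ell}$ with $\qih f=\sum_{\ell=0}^{n-1}s_\ell$. Under the hypothesis $\omega_{n-1}\subset\cdots\subset\omega_0$, Proposition~\ref{P:aux quasi-interpolation} applies and gives $\BB_{\ell,\omega_\ell}\subset\tilde\HH_\ell$, hence $\VV_{\omega_\ell}\subset\Span\tilde\HH_\ell$. Second, use the nesting~\eqref{E:nesting}, i.e.\ $\Span\tilde\HH_\ell\subset\Span\tilde\HH_{\ell+1}$, iterated to reach $\Span\tilde\HH_\ell\subset\Span\tilde\HH_{n-1}=\Span\tilde\HH$ for every $\ell=0,1,\dots,n-1$. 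Combining these two inclusions yields $s_\ell\in\Span\tilde\HH$ for each $\ell$, and therefore $\qih f\in\Span\tilde\HH$.

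There is essentially no obstacle left: all the real work is embedded in Proposition~\ref{P:aux quasi-interpolation} (which in turn leaned on Lemma~\ref{L:auxiliar characterization} proved in the Appendix and on the characterization $\tilde\HH=\{\beta\in\HH\mid a_\beta>0\}$ from Theorem~\ref{T:caracterization of H}). What remains for this corollary is a purely formal assembly. The only thing worth being careful about is that $\tilde\HH_\ell$ itself is \emph{not} contained in $\tilde\HH$ as a set of functions (functions deactivated between levels $\ell$ and $n-1$ are dropped), so one must argue at the level of spans and use the nesting~\eqref{E:nesting}, rather than claiming a set inclusion of bases.
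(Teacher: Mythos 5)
Your proof is correct and follows exactly the route the paper intends: combining Remark~\ref{R: Image of Pi} with Proposition~\ref{P:aux quasi-interpolation} and the nesting~\eqref{E:nesting}. Your added caveat about arguing at the level of spans (since $\tilde\HH_\ell\not\subset\tilde\HH$ as sets) is a correct and worthwhile precision that the paper leaves implicit.
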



\subsection{Refinement of hierarchical spline spaces}\label{S:refinement}

When thinking of hierarchical splines within a refinement and an adaptation process, it 
is very important to have a precise link between the \emph{enlargement} of the hierarchy of subdomains ${\bf\Omega}_n = \{\Omega_0,\Omega_1,\dots,\Omega_n\}$ and the refinement of the corresponding hierarchical space. 
This issue has been addressed for \emph{classical} hierarchical splines in~\cite{GJS14} and here we address 
it for $\tilde \HH$. 

\begin{definition}
 Let ${\bf\Omega}_n := \{\Omega_0,\Omega_1,\dots,\Omega_n\}$ and ${\bf\Omega}_{n+1}^*:= \{\Omega_0^*,\Omega_1^*,\dots,\Omega_n^*,\Omega_{n+1}^*\}$ be hierarchies of subdomains of $\Omega$ of depth (at most) $n$ and $n+1$, respectively. We say that ${\bf\Omega}_{n+1}^*$ is an \emph{enlargement} of ${\bf\Omega}_n$ if
 $$\Omega_\ell\subset\Omega_\ell^*,\qquad \ell=1,2,\dots,n.$$ 
 \end{definition}

Let ${\bf\Omega}_{n+1}^*$ be an enlargement of ${\bf\Omega}_n$. Now, the corresponding hierarchical B-spline basis ${\HH}^*$ and 
\emph{refined} 
mesh $\QQ^*$ are given by 
\begin{equation*}
{\HH}^*:= \bigcup_{\ell = 0}^{n} \{\beta \in \BB_\ell \mid \supp \beta 
\subset \Omega_\ell^* \wedge  \supp \beta \not\subset \Omega_{\ell+1}^*\},
\end{equation*}
and
$$\QQ^*:= \bigcup_{\ell = 0}^{n}\{ Q\in\QQ_\ell \,\mid\, Q\subset \Omega_\ell^* 
\,\wedge\, Q\not\subset \Omega_{\ell+1}^*\}.$$

In~\cite{GJS14} has been proved that 
\begin{equation}\label{E:nesting by refinement}
\Span\HH\subset\Span\HH^*. 
\end{equation}

Let $\{a_\beta^*\}_{\beta\in{\HH}^*}$ denote the sequence of coefficients (with respect to the hierarchy ${\bf\Omega}_{n+1}^*$) given by Lemma~\ref{L:definition of the weights}. Thus, we have that $$\sum_{\beta\in{\HH}^*} a_\beta^*\beta \equiv 1,\qquad\text{ on }\Omega,$$ and thus, we can consider
$$\tilde\HH^* := \{\beta\in {\HH}^*\mid a_\beta^* > 0\}.$$

The following theorem establishes the analogous of~\eqref{E:nesting by refinement} when considering the basis~$\tilde\HH$.

\begin{theorem}\label{T:nesting by refinement}
$$\Span\tilde\HH\subset \Span\tilde\HH^*.$$
\end{theorem}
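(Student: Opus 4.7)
My approach is to combine the characterization $\tilde\HH=\{\beta\in\HH : a_\beta>0\}$ from Theorem~\ref{T:caracterization of H} with the nesting $\Span\HH\subset\Span\HH^*$ recalled in~\eqref{E:nesting by refinement}, and extract the conclusion by comparing the two partition-of-unity identities coming from Lemma~\ref{L:definition of the weights}. The central observation will be that when one expands a function of $\HH$ in the basis $\HH^*$, the resulting coefficients are nonnegative, and this sign information propagates through the identity linking $\{a_\beta\}$ and $\{a_\gamma^*\}$.

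First I would fix $\beta\in\HH$ and show that its unique expansion $\beta=\sum_{\gamma\in\HH^*}c^\beta_\gamma\,\gamma$ (which exists by~\eqref{E:nesting by refinement} and is unique by linear independence of $\HH^*$) satisfies $c^\beta_\gamma\ge 0$ for every $\gamma\in\HH^*$. This will be done by a finite recursion: $\beta\in\BB_\ell$ with $\supp\beta\subset\Omega_\ell\subset\Omega_\ell^*$, so either $\supp\beta\not\subset\Omega_{\ell+1}^*$, in which case $\beta\in\HH^*$ and the expansion is trivial, or $\supp\beta\subset\Omega_{\ell+1}^*$, in which case the multivariate two-scale relation~\eqref{E:two scale relation} writes $\beta$ as a combination of its children with positive coefficients, each child sitting in $\BB_{\ell+1}$ with support still in $\Omega_{\ell+1}^*$, so the same dichotomy can be iterated. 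The recursion terminates at level $n$ (since $\Omega_{n+1}^*=\emptyset$), and uniqueness identifies the resulting nonnegative coefficients with $c^\beta_\gamma$. Next I would substitute this expansion into $\sum_{\beta\in\HH}a_\beta\,\beta\equiv 1$ and compare with $\sum_{\gamma\in\HH^*}a_\gamma^*\,\gamma\equiv 1$; linear independence of $\HH^*$ then yields
\[
a_\gamma^* \;=\; \sum_{\beta\in\HH} a_\beta\, c^\beta_\gamma, \qquad \forall\,\gamma\in\HH^*.
\]
Finally, if $\gamma\in\HH^*\setminus\tilde\HH^*$ then $a_\gamma^*=0$ by Theorem~\ref{T:caracterization of H} applied to ${\bf\Omega}_{n+1}^*$; since every summand is nonnegative, this forces $a_\beta\,c^\beta_\gamma=0$ for every $\beta\in\HH$. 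In particular, any $\beta\in\tilde\HH$ satisfies $a_\beta>0$ and so $c^\beta_\gamma=0$ for all $\gamma\notin\tilde\HH^*$. Hence each $\beta\in\tilde\HH$ lies in $\Span\tilde\HH^*$, and the theorem follows.

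\textbf{Main obstacle.} The delicate step is the nonnegativity of the coefficients $c^\beta_\gamma$: one has to make sure that the two-scale descent really does terminate inside $\HH^*$ (i.e.\ that the support condition $\supp(\cdot)\subset\Omega_{k}^*$ is preserved through the recursion) and that the nonnegative representation it produces coincides with the unique expansion in $\HH^*$. Once this is in place, the rest of the argument is a short sign-chasing via the two partition-of-unity identities.
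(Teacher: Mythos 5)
Your argument is correct, but it takes a genuinely different route from the paper's. The paper proves an auxiliary monotonicity lemma (Lemma~\ref{L:behaviour of weights by refinement}): by a one-line induction on the level, using the recursive definition~\eqref{E:definition of the weights} and $\Omega_{\ell+1}\subset\Omega_{\ell+1}^*$, one gets $a_\beta^*\ge a_\beta$ whenever both weights are defined. Then for $\beta\in\tilde\HH\cap\BB_\ell$ one has $\supp\beta\subset\Omega_\ell\subset\Omega_\ell^*$, hence $\beta\in\HH_\ell^*$, and $a_\beta^*\ge a_\beta>0$ gives $\beta\in\tilde\HH_\ell^*$ by~\eqref{E:caracterization for Hl}; the level-nesting $\Span\tilde\HH_\ell^*\subset\Span\tilde\HH^*$ from~\eqref{E:nesting} finishes the proof. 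You instead establish nonnegativity of the change-of-basis coefficients $c^\beta_\gamma$ from $\HH$ to $\HH^*$ via the two-scale descent, derive $a_\gamma^*=\sum_{\beta\in\HH}a_\beta c^\beta_\gamma$ by comparing the two partition-of-unity identities, and conclude by sign-chasing: if $a_\gamma^*=0$ then $c^\beta_\gamma=0$ for every $\beta$ with $a_\beta>0$. Both proofs ultimately rest on the positivity of the two-scale coefficients; the paper localizes this in the already-proved nesting~\eqref{E:nesting}, which makes its proof shorter, while yours effectively re-derives $\Span\HH\subset\Span\HH^*$ with explicit sign information rather than citing~\eqref{E:nesting by refinement}. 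Your route buys a structural identity of independent interest (the nonnegative refinement matrix linking $\{a_\beta\}$ and $\{a_\gamma^*\}$), and indeed recovers the paper's Lemma~\ref{L:behaviour of weights by refinement} as a corollary, since $c^\beta_\beta=1$ when $\beta\in\HH\cap\HH^*$ gives $a_\beta^*\ge a_\beta c^\beta_\beta=a_\beta$. The one step you flag as delicate --- termination of the descent inside $\HH^*$ --- does go through, because $\Omega_{n+1}^*=\emptyset$ and supports are nonempty, so every branch of the recursion terminates at an active function of the enlarged hierarchy.
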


In order to prove this result we need the following auxiliary lemma.

\begin{lemma}\label{L:behaviour of weights by refinement}
If $\beta\in\BB_\ell$ and $\supp\beta\subset\Omega_\ell$, for $\ell=0,1,\dots,n-1$, then
$$a_{\beta}^*\ge a_\beta.$$
\end{lemma}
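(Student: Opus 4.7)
My plan is to prove Lemma~\ref{L:behaviour of weights by refinement} by induction on the level~$\ell$, exploiting the recursive definition~\eqref{E:definition of the weights} of the coefficients $a_\beta$ and $a_\beta^*$ together with the containment $\Omega_{\ell+1}\subset\Omega_{\ell+1}^*$ inherited from the enlargement assumption. A preliminary observation I would make explicit is that, since $c_{\beta_{\ell+1}}(\beta_\ell)\ge 0$ everywhere and $a_{\beta_0}=a_{\beta_0}^*=1$ for $\beta_0\in\BB_0$, a trivial induction gives $a_\beta\ge 0$ and $a_\beta^*\ge 0$ whenever these weights are defined. Note also that $\supp\beta\subset\Omega_\ell\subset\Omega_\ell^*$ ensures both $a_\beta$ and $a_\beta^*$ are meaningful.

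The base case $\ell=0$ is immediate: $\Omega_0=\Omega_0^*=\Omega$, and $a_{\beta_0}=a_{\beta_0}^*=1$ for every $\beta_0\in\BB_0$. For the inductive step, suppose the inequality holds at level $\ell$, and take $\beta_{\ell+1}\in\BB_{\ell+1}$ with $\supp\beta_{\ell+1}\subset\Omega_{\ell+1}$. Since $\Omega_{\ell+1}\subset\Omega_{\ell+1}^*$, both $a_{\beta_{\ell+1}}$ and $a_{\beta_{\ell+1}}^*$ are well-defined. By~\eqref{E:definition of the weights} applied to each hierarchy,
\begin{equation*}
 a_{\beta_{\ell+1}}=\sum_{\substack{\beta_\ell\in\BB_\ell\\ \supp\beta_\ell\subset\Omega_{\ell+1}}}a_{\beta_\ell}\,c_{\beta_{\ell+1}}(\beta_\ell),\qquad a_{\beta_{\ell+1}}^*=\sum_{\substack{\beta_\ell\in\BB_\ell\\ \supp\beta_\ell\subset\Omega_{\ell+1}^*}}a_{\beta_\ell}^*\,c_{\beta_{\ell+1}}(\beta_\ell).
\end{equation*}

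I would then split the second sum according to whether $\supp\beta_\ell\subset\Omega_{\ell+1}$ or only $\supp\beta_\ell\subset\Omega_{\ell+1}^*\setminus\Omega_{\ell+1}$ (in the sense that $\supp\beta_\ell\subset\Omega_{\ell+1}^*$ but not $\supset\Omega_{\ell+1}$). The second group contributes a nonnegative quantity, while for each term in the first group we have $\supp\beta_\ell\subset\Omega_{\ell+1}\subset\Omega_\ell$, so the inductive hypothesis yields $a_{\beta_\ell}^*\ge a_{\beta_\ell}$, and multiplication by the nonnegative coefficient $c_{\beta_{\ell+1}}(\beta_\ell)$ preserves this. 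Summing, $a_{\beta_{\ell+1}}^*\ge a_{\beta_{\ell+1}}$, completing the induction.

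The argument is essentially bookkeeping: there is no real obstacle, but the mildly subtle point is to verify domain-of-definition conditions at each step — namely that $a_{\beta_\ell}^*$ is meaningful for every $\beta_\ell$ in the enlarged sum (guaranteed by $\supp\beta_\ell\subset\Omega_{\ell+1}^*\subset\Omega_\ell^*$) and that the inductive hypothesis can be applied only to those $\beta_\ell$ with $\supp\beta_\ell\subset\Omega_\ell$, which forces the set-splitting performed above. Once this is arranged, nonnegativity of the extra terms and of the coefficients $c_{\beta_{\ell+1}}(\beta_\ell)$ closes the estimate.
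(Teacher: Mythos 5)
Your proof is correct and follows essentially the same route as the paper: induction on the level, using $\Omega_{\ell+1}\subset\Omega_{\ell+1}^*$ to compare the two sums in~\eqref{E:definition of the weights}, discarding the nonnegative extra terms and applying the inductive hypothesis to the rest. The only difference is that you spell out the well-definedness and nonnegativity checks that the paper leaves implicit.
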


\begin{proof}
 The assertion holds for $\ell=0$ due to $a_\beta^*=a_\beta = 1$ for all $\beta\in\BB_0$. Now, assume that for some $\ell$ we have that
 \begin{equation}\label{E:induction hypothesis}
a_{\beta_\ell}^*\ge a_{\beta_\ell},\qquad \text{for }\beta_\ell\in\BB_\ell,\text{ such that }\supp\beta_\ell\subset\Omega_\ell.  
 \end{equation}
 Let $\beta_{\ell+1}\in\BB_{\ell+1}$ such that $\supp\beta_{\ell+1}\subset\Omega_{\ell+1}$. Since $\Omega_{\ell+1}\subset\Omega_{\ell+1}^*$, using~\eqref{E:induction hypothesis} and the definitions of $a_{\beta_{\ell+1}}$ and $a_{\beta_{\ell+1}}^*$, we have that
 $$a_{\beta_{\ell+1}}^*=\sum_{\substack{\beta_\ell\in\BB_\ell \\ \supp 
\beta_\ell\subset\Omega_{\ell+1}^*}} a_{\beta_\ell}^* 
c_{\beta_{\ell+1}}(\beta_\ell)\ge 
\sum_{\substack{\beta_\ell\in\BB_\ell \\ \supp 
\beta_\ell\subset\Omega_{\ell+1}}} a_{\beta_\ell}^* 
c_{\beta_{\ell+1}}(\beta_\ell)\ge\sum_{\substack{\beta_\ell\in\BB_\ell \\ \supp 
\beta_\ell\subset\Omega_{\ell+1}}} a_{\beta_\ell} 
c_{\beta_{\ell+1}}(\beta_\ell)=a_{\beta_{\ell+1}}.$$
\end{proof}

\begin{proof}[Proof of Theorem~\ref{T:nesting by refinement}]
Let $\beta\in\tilde\HH$ and let $\ell$ be such that $\beta\in\BB_\ell$. Since $\supp\beta\subset\Omega_\ell\subset\Omega_\ell^*$, we have that $\beta\in\HH_\ell^*$. On the other hand, Lemma~\ref{L:behaviour of weights by refinement} implies that $a_\beta^*\ge a_\beta>0$, and~\eqref{E:caracterization for Hl} yields $\beta\in\tilde\HH_\ell^*$. Finally, taking into account~\eqref{E:nesting} we have that $\beta\in\Span\tilde\HH^*$.
\end{proof}

\section{Concluding remarks}\label{S:remarks}

In this paper, after studying the approximation properties of hierarchical splines as defined in~\cite{Kraft-thesis},  we propose a  new hierarchical 
spline space, through a construction of a set of basis functions named $\tilde\HH$, that  enjoys several properties and may be considered as a valuable alternative to truncated hierarchical splines~\cite{GJS12}.  We can summarize and comment upon our results as follows:

\begin{itemize}
\item The basis that we construct simplifies the implementation and data structures needed to 
carry hierarchical splines because the refinement can be performed through the parent-children relations between B-splines. 
Unlike the classical hierarchical space, where algorithms traversing the mesh are needed to identify the new active B-splines, we just add children of already active B-splines. Moreover, we believe that our construction can be suitably used in conjunction with function-based error indicators, i.e., error indicators that mark functions (and not elements) to be refined: in our framework, a marked function would be simply replaced by some of its children. These aspects are studied in the forthcoming paper~\cite{BuGa15}.
\item The weighted basis $\{a_\beta\beta\,|\,\beta\in\tilde\HH\}$ constitutes a convex partition of unity and has the advantage of preserving simple basis function supports (only hypercubes) and also, in principle, ask for the use of simple spline evaluation formulae. 
\item We have extended the multiscale quasi-interpolant proposed in~\cite{Kraft-thesis}  to the cases of general and open knot vectors and we have provided local approximation estimates in $L^q$-norms, for $1\le q\le \infty$. This interpolant is  built in a multiscale fashion, it is not a projector in general and is not based on dual functionals, unlike the one presented in~\cite{SM14} based on THB-splines. On the other hand, it verifies optimal local approximation estimates. 
\end{itemize}

\section*{Appendix 1}

Now we present the proof of Lemma~\ref{L:auxiliar characterization}. Here, if $A$ and $B$ are two knot vectors (i.e., sequences), we say that $A\subset B$ if $A$ is a subsequence of $B$. On the other hand, given two arbitrary sequences $A$ and $B$, we denote by $A\cap B$ the largest subsequence of $A$ and $B$.

\begin{proof}[Proof of Lemma~\ref{L:auxiliar characterization}]
For clarity of presentation, we consider first the univariate case $d=1$. Let $p=p_1$ be the polynomial degree and let $\ell$ be fixed satisfying $0\le \ell\le n-2$. Let $\Xi_{\ell}$ and $\Xi_{\ell+1}$ be the open knot vectors associated to the spline spaces $\VV_\ell$ and $\VV_{\ell+1}$, respectively. Let
$$\Xi_{\ell+1}:=\{\xi_1,\xi_2,\dots,\xi_{\#\BB_{\ell+1}+p+1}\}.$$
Let $\beta_{\ell+1}\in\BB_{\ell+1,\omega_{\ell+1}}$.  Then, there exists $Q\in\QQ_{\ell+1}$ such that $Q\subset\supp\beta_{\ell+1}$ and $Q\subset\omega_{\ell+1}$. Thus, there exists $k$ such that $Q=[\xi_k,\xi_{k+1}]$. By the definition of $\omega_{\ell+1}$, we have that $$\tilde Q=[\xi_{k-p},\xi_{k+p+1}]\subset\Omega_{\ell+1}.$$ 
Notice that
$$\Xi_{\tilde Q}:=\{\xi_{k-p},\dots,\xi_{k+p+1}\}$$
consists of $2p+2$ consecutive knots in $\Xi_{\ell+1}$. Since $\Xi_{\ell+1}$ is obtained from $\Xi_\ell$ by dyadic refinement, we have that $\#(\Xi_{\tilde Q}\cap\Xi_\ell)\ge p+1$. If $\#(\Xi_{\tilde Q}\cap\Xi_\ell)= p+1$, it is easy to check that we can add one knot $\hat\xi\in\Xi_\ell$ such that $\#((\Xi_{\tilde Q}\cup\{\hat\xi\})\cap\Xi_\ell)= p+2$ and $\{\xi\in(\Xi_{\tilde Q}\cup\{\hat\xi\})\cap\Xi_\ell\}\subset\Omega_{\ell+1}$. Therefore, there exists $\Xi_{\tilde Q}^\ell\subset \Xi_{\ell}$ such that
\begin{equation}\label{E:aux tricky}
 \#\Xi_{\tilde Q}^\ell\ge p+2,\qquad (\Xi_{\beta_{\ell+1}}\cap\Xi_\ell)\subset(\Xi_{\tilde Q}\cap\Xi_\ell)\subset\Xi_{\tilde Q}^\ell,\qquad \{\xi\in\Xi_{\tilde Q}^\ell\}\subset\Omega_{\ell+1}
\end{equation}
Let $r:=\#(\Xi_{\beta_{\ell+1}}\cap\Xi_\ell)$ and notice that $r\le p+1$. We consider two cases:
\begin{enumerate}
 \item[(i)] $\min\Xi_{\beta_{\ell+1}}$ or $\max\Xi_{\beta_{\ell+1}}$ matches a knot in $\Xi_{\ell}$: By~\eqref{E:aux tricky}, there exists $\beta_\ell\in\BB_\ell$ such that $\Xi_{\beta_{\ell+1}}\cap\Xi_{\ell}\subset \Xi_{\beta_\ell}\subset \Xi_{\tilde Q}^\ell$. Thus, $\beta_{\ell+1}\in\CC(\beta_{\ell})$ and $\supp\beta_\ell\subset\Omega_{\ell+1}$. 
 \item[(ii)] Neither $\min\Xi_{\beta_{\ell+1}}$ nor $\max\Xi_{\beta_{\ell+1}}$ match knots in $\Xi_{\ell}$: Since $\Xi_{\beta_{\ell+1}}=p+2$, in this case $r\le p$. Thus, there exists $\beta_\ell\in\BB_\ell$ such that $\Xi_{\beta_{\ell+1}}\cap\Xi_{\ell}\subset \Xi_{\beta_\ell}\subset \Xi_{\tilde Q}^\ell$. Again, $\beta_{\ell+1}\in\CC(\beta_{\ell})$ and $\supp\beta_\ell\subset\Omega_{\ell+1}$. 
\end{enumerate}

Finally, for the multivariate case $d>1$, we can apply this argument in each coordinate direction. 
\end{proof}


\end{document}